\documentclass[10.2pt,a4paper]{amsart}

\usepackage[utf8]{inputenc}
\usepackage[T1]{fontenc}
\usepackage{hyperref}
\usepackage{theoremref}
\usepackage[english]{babel}
\usepackage{amsthm}
\usepackage{amsmath}
\usepackage{amssymb}
\usepackage{graphicx}
\graphicspath{ {images1/} }
\newtheorem{thm}{Theorem}
\newtheorem{defi}{Definition}
\newtheorem{lemma}{Lemma}

\begin{document}
\title{Speedups for Presburger Arithmetic and Real Closed Fields}
\author{Fedor Pakhomov\\ Julien Daoud}
\maketitle

\begin{abstract}
In the present paper, we consider Presburger arithmetic $\mathsf{PrA}$ and the theory of real closed fields $\mathsf{RCF}$. Due to quantifier elimination in these theories, there are two kinds of natural ways to axiomatize them. Namely, on one hand, $\mathsf{PrA}$ can be axiomatized with the full schema of first-order induction, and $\mathsf{RCF}$ with the full schema of the first-order least upper bound principle. At the same time, there are natural axiomatizations of these theories that avoid the use of formulas of unbounded quantifier depth. In the present paper, we compare these two groups of axiomatizations from the perspective of proof lengths. We show that the first group of axiomatizations enjoys at least a double exponential speedup.
\end{abstract}

\section{Introduction}
The investigation of the speedup phenomenon for first-order theories—situations where, for some sentences provable in both theories $T$ and $U$, their proofs in $T$ are much shorter than in $U$—goes back to the classical result of G\"odel \cite{Goed36}, who discovered super-recursive speedups for situations where $T$ is stronger than $U$ (see also \cite{Ehr71}). The situation, however, becomes substantially different when dealing with pairs of conservative theories. Notable results here are the speedup results of Pudl\'ak \cite{Pud86} for von Neumann-G\"odel-Bernays set theory $\mathsf{NGB}$ over $\mathsf{ZF}$ and for $\mathsf{ACA}_0$ over $\mathsf{PA}$, where the speedup has the magnitude of the hyper-exponential function $\mathsf{exp}^*(0)=1$, $\mathsf{exp}^*(x+1)=2^{\mathsf{exp}^*(x)}$. The technique used there is based on the use of finite consistency statements and cut-shortening, naturally leading to this magnitude of speedup (see e.g. \cite{Kol24}). For a survey on the study of proof lengths, see Pudl\'ak's survey \cite{Pud98}.

In this paper, we look at the speedup phenomenon in the domain of complete decidable theories and present a case study on real closed fields and Presburger arithmetic. The decidability of Presburger arithmetic is a classical result going back to the late 1920s \cite{Pre29}. The decidability of the theory of real closed fields is due to Tarski \cite{Tar51}. For both theories, decidability is achieved via quantifier elimination when the theories are formulated in suitable signatures. 

In the present paper, we consider two styles of axiomatizations for these two theories. One group consists of axiomatizations that are the schematic counterparts of the natural second-order axiomatizations of the standard models of the respective theories. Another group consists of more minimalistic axiomatizations, isolating the principles necessary to formalize quantifier elimination. We establish a double-exponential speedup of the schematic axiomatizations over the corresponding more explicit axiomatizations.

For Presburger arithmetic, a more efficient axiomatization $\mathsf{PrA}$ takes the finitely axiomatized theory $\mathsf{PrA}^-$ (which provides basic properties of the intended standard model $(\mathbb{N};=,0,1,+)$) and extends it with the full schema of induction. Another axiomatization of the set of all true first-order sentences in the model $(\mathbb{N};=,0,1,+)$ that we consider is $\mathsf{PrA}_{\mathit{alt}}$, defined as the extension of $\mathsf{PrA}^-$ by modulo-comparison axioms indexed by prime numbers $p$, stating that every $x$ is congruent modulo $p$ to exactly one number in $\{0,\ldots,p-1\}$ (see \cite{Gra97,Vis11}). Our central result regarding $\mathsf{PrA}$ is that we define a sequence of sentences $(\varphi_i\in \mathcal{L}_{\mathsf{PrA}})_{i<\omega}$, where $\varphi_i$ has a length polynomial in $i$ and a proof in $\mathsf{PrA}$ polynomial in $i$, but for some constant $\varepsilon>0$, its shortest proof in $\mathsf{PrA}_{\mathit{alt}}$ has a length $\ge 2^{2^{i^{\varepsilon}}}$.

For real closed fields, we first consider the axiomatization $\mathsf{RCF}$, which consists of the axioms of ordered fields, the existence of square roots for exactly the non-negative numbers, and the existence of roots for odd-degree polynomials. The second, more powerful axiomatization $\mathsf{Tarski}$ that we consider extends the same base theory with the least upper bound schema. For the purposes of investigating proof length, this corresponds to Tarski's axiomatization of elementary geometry \cite{Tar99}, with the continuity schema there corresponding to the least upper bound schema in our case. Analogously to the previous case, we prove that $\mathsf{Tarski}$ has at least a double-exponential speedup over $\mathsf{RCF}$.

On a technical level, the main idea of our proofs is to construct short sentences expressing the validity of much longer axioms from the less efficient axiomatizations in such a way that the produced sentences have short proofs in the more efficient axiomatization. These techniques are rather similar to those employed by Fischer and Rabin in their classical paper, which gives lower bounds on the computational complexity of the decision procedures for these theories \cite{Fischer1998}.

Additionally, we establish triple exponential upper bounds for the lengths of proofs of true sentences in $\mathsf{PrA}_{\mathit{alt}}$, thus also providing a triple exponential upper bound on the speedup of $\mathsf{PrA}$ over $\mathsf{PrA}_{\mathit{alt}}$. We conjecture that an analogous phenomenon, with a finite tower of exponentiations as an upper bound on the proofs of true sentences, holds for the theory $\mathsf{RCF}$.

\section{Preliminaries}
As is common in investigations of proof lengths, we will use Solovay's classical result (see also \cite{FR79}) regarding short formulas expressing iterated schematic definitions.
\begin{thm}[Solovay, see {\cite[p.~557]{Pud98}}]\label{eff_def}
Let $\mathcal{L}$ be a first-order language, where $\neg$ and at least one of the three logical connectives $\rightarrow$, $\vee$, or $\wedge$ are present. Furthermore, $\mathcal{L}$ should contain the equality symbol and constants $0,1$. Let $\phi_0(\vec{a}, \vec{b} )$ and $\Phi (R,\vec{a}, \vec{b})$ be given, with $\vec{a}$ being free variables, $\vec{b}$ being parameters (defined in $\mathcal{L}$), and $R(\vec{a})$ being a relation symbol outside of $\mathcal{L}$.

Then it is possible to construct a sequence of formulas $\phi_1(\vec{a}, \vec{b})$, $\phi_2(\vec{a}, \vec{b})$, ... such that the formulas $$\phi_{n+1}(\vec{a}, \vec{b}) \mathrel{\leftrightarrow} \Phi ( \phi_{n},\vec{a}, \vec{b})$$
have polynomial (in $n$) proofs in $\mathsf{QPC}$ from the axiom $0\ne 1$.
\end{thm}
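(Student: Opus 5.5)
The plan is the standard Solovay trick: fold all occurrences of $R$ in $\Phi$ into a single occurrence, so that passing from $\phi_n$ to $\phi_{n+1}$ adds only a constant-size block of symbols. Unfolding $\phi_n$ naively into $\Phi$ would make $\phi_n$ exponentially long whenever $R$ occurs more than once.

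\textbf{Step 1: normal form of $\Phi$.} Let $R(\vec t_1),\dots,R(\vec t_k)$ list the occurrences of $R$ in $\Phi$, where $\vec t_j$ are tuples of variables; terms can be eliminated in advance using equality. Introduce fresh variables $e_1,\dots,e_k$ to serve as truth-value codes. Let $\Phi^*(\vec e,\vec a,\vec b)$ be $\Phi$ with each $R(\vec t_j)$ replaced by $e_j=1$. This uses the axiom $0\ne 1$ and the constants $0,1$ to encode the bits "true" and "false." Then
$$\Phi(R,\vec a,\vec b)\leftrightarrow \exists \vec e\,\Big(\bigwedge_j (e_j=0\vee e_j=1)\wedge\bigwedge_{j}\big(e_j=1\leftrightarrow R(\vec t_j)\big)\wedge \Phi^*(\vec e,\vec a,\vec b)\Big).$$
The $k$-fold conjunction over $j$ is next turned into a single use of $R$, via a universally quantified selector:
$$\forall \vec x\,\forall y\,\Big(\bigvee_j(\vec x=\vec t_j\wedge y=e_j)\rightarrow (y=1\leftrightarrow R(\vec x))\Big).$$
The biconditional duplicates $R$, so instead I split $R(\vec x)\leftrightarrow y=1$ using one more bit variable. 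The condition is written as $\forall\vec x\,\forall y\,(\mathrm{Sel}\rightarrow \psi(R,\vec x,y))$, where $\psi$ states "$R(\vec x)$ has truth value $y$." Then $\psi$ must be expressed with a single occurrence of $R$.

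\textbf{Step 2: the recursion.} The key is to define the recursion not for $\phi_n(\vec a)$ but for the graph $\chi_n(\vec a,y)$, meaning "$y\in\{0,1\}$ and $y$ is the truth value of $\phi_n(\vec a)$." Then $\chi_{n+1}(\vec a,y)$ is defined as
$$\exists\vec e\,\big(\text{bits}(\vec e)\wedge (y=1\leftrightarrow\Phi^*(\vec e,\vec a,\vec b))\wedge y\in\{0,1\}\wedge\forall\vec x\,\forall z\,(\textstyle\bigvee_j(\vec x=\vec t_j\wedge z=e_j)\rightarrow \chi_n(\vec x,z))\big).$$
Here $\chi_n$ occurs exactly once, so $|\chi_{n+1}|=|\chi_n|+O(1)$ and $|\chi_n|=O(n)$ up to renaming of bound variables. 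Variables can be reused, so names stay of bounded size, or at worst $O(\log n)$. Finally, set $\phi_n(\vec a):=\chi_n(\vec a,1)$, with $\chi_0(\vec a,y):=(y=1\wedge\phi_0)\vee(y=0\wedge\neg\phi_0)$.

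\textbf{Step 3: short proofs.} First I prove by induction on $n$, producing a chain of lemmas, that $\chi_n$ is total and functional:
$$\forall\vec a\,\exists y\,\chi_n(\vec a,y)\quad\text{and}\quad\chi_n(\vec a,y)\wedge\chi_n(\vec a,y')\rightarrow y=y'.$$
Each step derives the level-$(n+1)$ statement from the level-$n$ one by a proof of fixed size. The fixed size is up to the length of the formulas involved, which is $O(n)$, so each step costs $O(n)$ and the whole chain costs $O(n^2)$. Next, using totality and functionality at level $n$, I show $\forall \vec x\,(\chi_n(\vec x,1)\leftrightarrow\neg\chi_n(\vec x,0))$. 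This lets me replace $e_j=1$ by $\phi_n(\vec t_j)$ inside $\Phi^*$, giving $\phi_{n+1}\leftrightarrow\Phi(\phi_n)$. That replacement is one instance of a substitution-of-equivalents schema of size $O(|\Phi|\cdot|\phi_n|)$, so its cost is polynomial.

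\textbf{Main obstacle.} The delicate point is keeping each inductive step a uniform proof template. Its size must be linear in the current formula length, and it must not re-prove the earlier levels. I would handle this by stating the totality and functionality lemmas as explicit sentences at each level, and citing the previous one once.
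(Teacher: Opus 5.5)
Your architecture is the right way to realize the idea the paper states. The paper gives no proof, only the citation and the remark that $\Phi$ is rewritten with a single occurrence of $R$. Your ingredients are a truth-value graph $\chi_n(\vec a,y)$, bit variables $\vec e$, a single positive occurrence of $\chi_n$ behind a selector, and a chain of totality/functionality lemmas instantiated from a fixed template.

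\textbf{The gap.} Steps 1 and 2 only work when every variable of each argument tuple $\vec t_j$ is free in $\Phi$. You place $\exists\vec e$ and the selector $\bigvee_j(\vec x=\vec t_j\wedge z=e_j)$ outside $\Phi^*$. Suppose an occurrence $R(\vec t_j)$ lies in the scope of a quantifier of $\Phi$ that binds a variable of $\vec t_j$. Then its truth value is not one bit but depends on that bound variable, and the $\vec t_j$ in your selector are no longer bound by it. The equivalence displayed in Step 1 is then false; take $\Phi(R,a)=\forall u\,R(u)$.

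This is the typical case, not a corner case. In the paper's recursion for $\mathit{Mul}_n$, every occurrence of $\mathit{Mul}_{n-1}$ has arguments bound by $\exists y_1,y_2,y_3,y_4,z_1,z_2,z_4$. In $\mathit{Pow}_n$, one occurrence sits under $\forall y_2$.

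Nor can you simply move $\exists\vec e$ and the selector inside the quantifiers of $\Phi$. Then $\chi_n$ lies inside the right-hand side of your top-level $y=1\leftrightarrow\cdots$. Unfolding that biconditional duplicates $\chi_n$ and restores the exponential blow-up.

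\textbf{The missing step.} Build $\chi_{n+1}(\vec a,y)$ from $(y=1\wedge\Phi)\vee(y=0\wedge\neg\Phi)$.
\begin{enumerate}
\item Put $\Phi$ and $\neg\Phi$ into prenex form with disjoint bound variables $\vec u,\vec v$.
\item Pull both prefixes to the front, obtaining $Q\vec u\,Q'\vec v\,\mu$ with $\mu$ quantifier-free and containing occurrences $R(\vec s_1),\dots,R(\vec s_k)$.
\item Only now apply your device to the matrix. Let $\mu^*(\vec e)$ be $\mu$ with each $R(\vec s_j)$ replaced by $e_j=1$, and define $\chi_{n+1}(\vec a,y)$ as
\[Q\vec u\,Q'\vec v\,\exists\vec e\,\Big(\textstyle\bigwedge_j(e_j=0\vee e_j=1)\wedge\mu^*(\vec e)\wedge\forall\vec x\,\forall z\,\big(\bigvee_j(\vec x=\vec s_j\wedge z=e_j)\to\chi_n(\vec x,z)\big)\Big).\]
\end{enumerate}
If $\chi_n$ is the graph of a total function into $\{0,1\}$, the selector forces $e_j$ to be the truth value of $\phi_n(\vec s_j)$. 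So this formula is equivalent to $(y=1\wedge\Phi(\phi_n))\vee(y=0\wedge\neg\Phi(\phi_n))$. Totality and functionality at level $n+1$, and $\phi_{n+1}\leftrightarrow\Phi(\phi_n)$, then follow immediately using $0\neq 1$. Now $\chi_n$ genuinely occurs once and the prenex conversions are fixed templates, so your Step 3 accounting goes through.
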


The key idea of the proof of this theorem is to replace the formula $\Phi$ with an equivalent formula where $R$ appears only once, thereby avoiding an exponential blow-up. 

This result enables one to define arbitrary formulas iteratively but take them to be equivalent to some polynomial-size (in the number of iterations) formulas. Indeed, Solovay's theorem states that one can have polynomial proofs of the equivalence stated in the theorem, and a fortiori the $\phi_n$'s must therefore be of a size polynomial in $n$.

This is the approach we will take. We shall define bounded multiplication for Presburger arithmetic, i.e., multiplication will be defined for a left factor bounded by a certain quantity (here $2^{2^n}$) depending on the number $n$ of iterations of the formula $\Phi$. Similarly, we define bounded exponentiation for the theory of real closed fields.

By controlling the meaning of these formulas as well as their size (the number of characters), we will be able to define a certain sequence of formulas that will have short (i.e., polynomial) proofs in our first axiomatization of Presburger arithmetic but will only be provable with long axioms, and hence with long proofs, in the second axiomatization. The same idea will apply to the axiomatizations for real closed fields.

In this paper, when discussing \textit{formal} proofs, several proof systems could be chosen as our base system. Classically, one compares and differentiates proof systems by checking whether, given a proof of a sentence in a first proof system, there exists a proof of the same sentence in the second proof system that has grown at most polynomially (for some fixed polynomial), and vice versa \cite[p.~552]{Pud98}. Sequent calculus with cuts, natural deduction, and Hilbert-style proofs are in fact all polynomially equivalent \cite[p.~554]{Pud98}. Hence, choosing one over the other will not affect the results of this paper. 

There is another class of proof systems that is not polynomially equivalent to this first class. As an example, sequent calculus without cuts falls into this second class of proof systems \cite[p.~555]{Pud98}. In this paper, we will conduct our proofs using an arbitrary proof system from the first class. 
\section{Speedup in Presburger Arithmetic}
\subsection{Definitions}
\begin{defi}
Let $\mathcal{L}_{\mathsf{PrA}}$ be the language of Presburger arithmetic, which is the language of first-order logic with equality, constants $0$ and $1$, and the binary function symbol $+$.
\end{defi}
\begin{defi}\thlabel{defPrA^-}
Let the theory $\mathsf{PrA}^-$ be the theory with the following axioms:\\\\
1. Axioms of cancellative Abelian semigroup with neutral element
$0$ for $+$\\
2. $\forall x \;  x+1\neq0$\\
3. $\forall x \;  x\neq0 \; \rightarrow \exists y \; x=y+1$\\
4. $\forall x,y \; x\leq y \vee y \leq x,$\\
where $x \leq y$ is a shorthand for the formula $\exists z \; x+z=y$.
\end{defi}

Note that all models of $\mathsf{PrA}^-$ contain $(\mathbb{N};=,0,1,+)$ as their initial segments. Therefore, we will freely identify the standard naturals with the elements of the said initial segments of the models of $\mathsf{PrA}^{-}$. For each standard natural number $n$, of course, it is the value of the term \[\displaystyle \underbrace{1+(\ldots +(1+1)\ldots)}\limits_{\mbox{$n$-times}}\] in all models of $\mathsf{PrA}^{-}$, and hence we can freely use the standard numbers within $\mathsf{PrA}^{-}$-formulas. Within models of $\mathsf{PrA}^{-}$, we define multiplication by a standard natural $n$ as a function mapping a given \[x\mapsto\displaystyle \underbrace{x+(\ldots +(x+x)\ldots)}\limits_{\mbox{$n$-times}}.\]

\begin{lemma}[On bounded size definability of bounded multiplication]
In $\mathcal{L}_{\mathsf{PrA}}$, we can define formulas $\mathit{Mul}_n (x,y,z)$ ($n$ being a natural number), which express in all models of $\mathsf{PrA}^{-}$ that $y\leq 2^{2^n}$ and $x\cdot y=z$. These formulas are of a size polynomial in $n$.
\end{lemma}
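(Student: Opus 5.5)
We will define the formulas $\mathit{Mul}_n$ by a recursion of the kind covered by Theorem~\ref{eff_def} (Solovay), with $n$ counting the iterations. Each step squares the bound on the left factor. The key arithmetic fact is this: if $y\le 2^{2^{n+1}}=(2^{2^n})^2$, then we can write $y=y_1\cdot B+y_0$ with $B=2^{2^n}$ and $y_0<B$, $y_1\le B$. Then $x\cdot y = (x\cdot y_1)\cdot B + x\cdot y_0$. So multiplication with a factor bounded by $2^{2^{n+1}}$ reduces to at most four multiplications with factors bounded by $2^{2^n}$ (computing $x y_1$, then multiplying by $B$, then $x y_0$), plus additions. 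We also need to pin down the constant $B=2^{2^n}$ itself. This can be done inside the recursion: $B$ is the unique $b$ with $\mathit{Mul}_n(b',b',b)$ for the previous $b'=2^{2^{n-1}}$. Alternatively, we carry a defining formula $E_n(b)$ (``$b=2^{2^n}$'') alongside $\mathit{Mul}_n$ as a simultaneous recursion, folded into one relation via a tag argument.

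Concretely, set $\mathit{Mul}_0(x,y,z)$ to be the quantifier-free disjunction $(y=0\wedge z=0)\vee(y=1\wedge z=x)\vee(y=1+1\wedge z=x+x)$. Set $E_0(b)$ to be $b=1+1$. Then define
\[
\mathit{Mul}_{n+1}(x,y,z)\;:\leftrightarrow\;\exists b\,\exists y_0\,\exists y_1\,\exists u\,\exists v\,\exists w\,\bigl(E_n(b)\wedge y=w'+y_0\wedge \mathit{Mul}_n(b,y_1,w')\wedge y_0<b\wedge y_1\le b\wedge \mathit{Mul}_n(x,y_1,u)\wedge \mathit{Mul}_n(u,b,v)\wedge \mathit{Mul}_n(x,y_0,w)\wedge z=v+w\bigr),
\]
where $w'$ is also existentially quantified. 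We also set $E_{n+1}(b)\leftrightarrow \exists c\,(E_n(c)\wedge \mathit{Mul}_n(c,c,b))$.

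Correctness is proved by induction on $n$ in every model $M$ of $\mathsf{PrA}^-$. By the induction hypothesis, $\mathit{Mul}_n$ defines the graph of $(x,y)\mapsto xy$ restricted to $y\le 2^{2^n}$, where $y$ ranges over standard naturals. Indeed, the remark that models of $\mathsf{PrA}^-$ have $\mathbb{N}$ as an initial segment shows that any $y\le 2^{2^n}$ in $M$ is standard. Multiplication by a standard number is the repeated sum, so distributivity $x(a+c)=xa+xc$ and $x(ab)=(xa)b$ for standard $a,b,c$ follow from associativity and commutativity of $+$. Hence the displayed clause holds exactly when $y\le 2^{2^{n+1}}$ and $z=xy$. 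Uniqueness of the decomposition comes from uniqueness of Euclidean division in $\mathbb{N}$.

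The main obstacle is size. Unfolded naively, the right-hand side contains $\mathit{Mul}_n$ four or more times, so the formulas would grow exponentially. This is exactly what Theorem~\ref{eff_def} handles. We take $\Phi(R,\vec a)$ to be the above clause, with $R$ the joint relation for $\mathit{Mul}_n$ and $E_n$. The theorem yields formulas $\phi_n$ with polynomial-size proofs of $\phi_{n+1}\leftrightarrow\Phi(\phi_n)$, so in particular $|\phi_n|$ is polynomial in $n$. Behind this is the standard trick of replacing the multiple occurrences by a single one: $\forall a\,\forall c\,\forall d\,\bigl((a,c,d)\in\{(x,y_1,u),(u,b,v),\dots\}\rightarrow R(a,c,d)\bigr)$. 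The equivalence holds logically, so the $\phi_n$ satisfy the same recursion in every model and therefore define the same relation. We set $\mathit{Mul}_n$ to be the corresponding component of $\phi_n$, which gives polynomial size together with the intended meaning.
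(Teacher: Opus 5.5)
\textbf{Gap: your clause does not bound $y$ by $2^{2^{n+1}}$.}
Put $B=2^{2^n}$. The conjuncts only force $y=By_1+y_0$ with $y_1\le B$ and $y_0<B$. Take $y_1=B$ and $y_0=B-1$: all four calls to $\mathit{Mul}_n$ stay within their range, so the clause holds with $z=xy$ for $y=B^2+B-1>2^{2^{n+1}}$. Already for $n=0$ (so $b=2$, $y_1=2$, $y_0=1$), your $\mathit{Mul}_1(x,5,z)$ holds with $z=5x$, although $5>2^{2^1}$. In general, for $n\ge 1$ your $\mathit{Mul}_n$ defines multiplication for $y\le 2^{2^n}+2^{2^{n-1}}-1$, not for $y\le 2^{2^n}$.

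You checked only one direction: every $y\le B^2$ has such a decomposition. The converse, which you need for ``holds exactly when $y\le 2^{2^{n+1}}$'', is false. Uniqueness of Euclidean division is beside the point here. Only the existence of a decomposition matters, and $z$ is forced to equal $xy$ in any case. The lemma asserts the exact bound, and the paper relies on it later, e.g.\ when it states that $\mathit{Hyp}_n(y)$ holds for precisely the standard $y\le 2^{2^n}$. So this step must be repaired.

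\textbf{How to fix it.} Either of these works:
\begin{itemize}
\item Add the conjunct $\exists e\,(\mathit{Mul}_n(b,b,e)\wedge y\le e)$.
\item Make the quotient strict and the remainder non-strict, i.e.\ $y_1<b$ and $y_0\le b$. The maximum is then $(B-1)B+B=B^2$, and $y=B^2$ is still reached via $y_1=B-1$, $y_0=B$.
\end{itemize}

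The second option is exactly the paper's device, and it also makes your $E_n$ unnecessary. The paper never pins down $B$. It writes $y=y_1y_2+y_4$ with $y_1,y_4\le 2^{2^{n-1}}$ (enforced by the calls to $\mathit{Mul}_{n-1}$) and $y_2<y_1$. The maximum $2^{2^{n-1}}(2^{2^{n-1}}-1)+2^{2^{n-1}}=2^{2^n}$ then comes out automatically. Only a single relation is iterated, so no tag encoding is needed to apply Theorem~\ref{eff_def}.

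Apart from this point, your size argument via Solovay's theorem and your semantic reasoning about standard multipliers (distributivity and associativity for standard factors) are fine.
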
 
\begin{proof}
Let $\mathit{Mul}_0 (x,y,z)$ be: $$ (y=0\rightarrow z=0) \wedge (y=1\rightarrow z=x) \wedge (y=2\rightarrow z=x+x) \wedge \neg (y\neq 0 \wedge y \neq 1 \wedge y \neq 2)$$
It is straightforward to check that the formula works as expected.

For $n>0$, we define $\mathit{Mul}_{n}(x,y,z)$ to be equivalent to:
\begin{equation}\label{mul_n_eq}\begin{aligned} \exists y_1,y_2,y_3,y_4,z_1,z_2,z_4&(y=y_3+y_4\land \mathit{Mul}_{n-1}(y_1,y_2,y_3)\land \mathit{Mul}_{n-1}(x,y_1,z_1)\land\\ &  \mathit{Mul}_{n-1}(z_1,y_2,z_2)\land \mathit{Mul}_{n-1}(x,y_4,z_4)\land\\ &z=z_2+z_4 \land y_2<y_1).\end{aligned}\end{equation}
With the help of Theorem \ref{eff_def}, we construct $\mathit{Mul}_n$ to be of a size polynomial in $n$, such that $\mathit{Mul}_n(x,y,z)$ is equivalent to (\ref{mul_n_eq}) with a proof polynomial in $n$.

To see that $\mathit{Mul}_n$ works in the expected way, assuming that $\mathit{Mul}_{n-1}$ works in the intended way, we observe that the matrix of (\ref{mul_n_eq}) entails that:
\begin{enumerate}
    \item \label{bound_mul_def_cond1}$y_3=y_1\cdot y_2$ and $y=y_1\cdot y_2+y_4$ with $y_1,y_4\le 2^{2^{n-1}}$ and $y_2<y_1$
    \item $z_1=xy_1$, $z_2=(x y_1)y_2$, $z_4=xy_4$, and $z=(x y_1)y_2+xy_4=x(y_1y_2+y_4)$.
\end{enumerate}
Note that condition (\ref{bound_mul_def_cond1}) entails that $y$ has to be less than or equal to $2^{2^n}$, since the maximal possible value of $y$ represented in this way is attained with $y_1=2^{2^{n-1}}$, $y_2=2^{2^{n-1}}-1$, and $y_4=2^{2^{n-1}}$. Furthermore, every other $y< 2^{2^n}$ has a representation as in (\ref{bound_mul_def_cond1}) with $y_1=2^{2^{n-1}}$ and $y_2$ and $y_4$ being the result and the remainder of the division of $y$ by $y_1$.  
\end{proof}
\begin{defi}

We define $\mathit{Hyp}_n(y)$ to be the conjunction of the following formulas:
\begin{enumerate}
    \item $\mathit{Mul}_{n}(1,y,y)$,
    \item $\mathit{Mul}_n(0,y,0)$,
    \item $\forall x \exists ! z \; \mathit{Mul}_n(x,y,z)$,
    \item \[\begin{aligned}\forall x_0, x_1, x_2 \; \Bigg(x_0=x_1+&x_2 \to \\ &  \exists z_0,z_1,z_2 \Bigg (\begin{aligned}\mathit{Mul}_{n}(x_1,y,z_1)\wedge \mathit{Mul}_{n}(x_2,y,z_2)\\\wedge \mathit{Mul}_{n}(x_0,y,z_0)\wedge z_0=z_1+z_2\end{aligned}\Bigg )\Bigg).\end{aligned}\]
\end{enumerate}
\end{defi}

Note that a straightforward check shows that $\mathit{Hyp}_n(y)$ is satisfied in all models of $\mathsf{PrA}^{-}$ for precisely the elements $y$ that are standard naturals $\le 2^{2^n}$.

We say that in a model of $\mathsf{PrA}^{-}$, an element $a$ is divisible by a standard natural $n$ with a remainder if, for some element of the model $b$ and some $0\le r<n$, we have $a=bn+r$.

\begin{defi}
    We define $\mathit{Div}_n(x)$ as the formula:
    \[\forall y ( \mathit{Hyp}_n (y)\rightarrow (y=0) \vee \exists a_1, b_1, b_2(\mathit{Mul}_{n}(a_1,y,b_1)\wedge x=b_1+b_2\wedge b_2<y)).\]
\end{defi}
Clearly, in all models of $\mathsf{PrA}^-$, the formulas $\mathit{Div}_n(x)$ express that an element $x$ is divisible with a remainder by all standard numbers $y\le 2^{2^n}$.

\subsection{Theories}
\begin{defi}
Let $\mathsf{PrA}$ be the theory $\mathsf{PrA}^-$ with the schema of induction.\\\\
i.e. For all formulas $\phi(x)$ with at least one free variable $x$, we have the axiom $$\phi (0) \wedge \forall x (\phi(x)\rightarrow \phi(x+1)) \rightarrow \forall x \,\phi(x)$$
\end{defi}
\begin{defi}
Let $\mathsf{PrA}_{\mathit{alt}}$ be the theory $\mathsf{PrA}^-$ with the following axioms.\\\\
For each prime $p$, we have the axiom $$x\equiv_p 0 \vee ... \vee x\equiv_p p-1,$$
where $x\equiv_n s$ is a shorthand for the formula $$\exists z \; (\underbrace{z+\ldots+z}\limits_{\mbox{$n$-times}}+s=x \vee \; \underbrace{z+\ldots+z}\limits_{\mbox{$n$-times}}+x=s).$$
\end{defi}
\subsection{Speedup}

\begin{lemma}
In $\mathsf{PrA}$, the proof of $\forall x Div_n (x)$ is of a size polynomial in $n$.
\end{lemma}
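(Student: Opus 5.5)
We prove $\forall x\,\mathit{Div}_n(x)$ by a single application of the induction schema of $\mathsf{PrA}$ to the formula $\mathit{Div}_n(x)$ itself. By Theorem \ref{eff_def} this formula has size polynomial in $n$, so the induction axiom instance has polynomial size. It then suffices to give polynomial-size proofs of the base case $\mathit{Div}_n(0)$ and of the step $\mathit{Div}_n(x)\to\mathit{Div}_n(x+1)$.

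\emph{Base case.} Fix $y$ with $\mathit{Hyp}_n(y)$ and $y\neq 0$. Clause 2 of $\mathit{Hyp}_n(y)$ gives $\mathit{Mul}_n(0,y,0)$. Take $a_1=0$, $b_1=0$, $b_2=0$. Then $0=0+0$, and $0<y$ follows from $y\neq 0$ in $\mathsf{PrA}^-$, with $<$ read as $\le$ together with $\neq$. This is a constant-size argument wrapped around the formula $\mathit{Mul}_n$.

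\emph{Induction step.} Assume $\mathit{Div}_n(x)$ and fix $y\neq 0$ with $\mathit{Hyp}_n(y)$. This gives $a_1,b_1,b_2$ with $\mathit{Mul}_n(a_1,y,b_1)$, $x=b_1+b_2$ and $b_2<y$. Then $x+1=b_1+(b_2+1)$, and $b_2+1\le y$ holds by the $\mathsf{PrA}^-$ axioms. We split into two cases.
\begin{enumerate}
\item If $b_2+1<y$, keep the same $a_1,b_1$ and take $b_2+1$ as the new remainder.
\item If $b_2+1=y$, take $a_1+1$ as the new multiplier, and show that $b_1+y$ is its product with $y$. Clause 3 of $\mathit{Hyp}_n(y)$ gives some $z$ with $\mathit{Mul}_n(a_1+1,y,z)$. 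Clause 4, applied to $x_0=a_1+1$, $x_1=a_1$, $x_2=1$, gives $z_0,z_1,z_2$ with $z_0=z_1+z_2$. Clause 1 gives $\mathit{Mul}_n(1,y,y)$, so uniqueness from clause 3 yields $z_1=b_1$ and $z_2=y$. Hence $\mathit{Mul}_n(a_1+1,y,b_1+y)$, and $x+1=b_1+y=(b_1+y)+0$ with $0<y$.
\end{enumerate}

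Every step is a fixed-length derivation in predicate logic from the $\mathsf{PrA}^-$ axioms and the clauses of $\mathit{Hyp}_n$, where $\mathit{Mul}_n$ is treated as an opaque formula. The proof therefore consists of a constant number of lines, each containing a bounded number of copies of $\mathit{Mul}_n$ and $\mathit{Hyp}_n$, so its total size is polynomial in $n$. The design point that makes this work is that the arithmetic properties of $\mathit{Mul}_n$ are taken from the hypothesis $\mathit{Hyp}_n(y)$ rather than proved. That avoids any recursion through the $n$ levels of the definition of $\mathit{Mul}_n$, which is where an exponential blow-up could otherwise appear.

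The main point needing care is the bookkeeping of $<$ and $\le$ in $\mathsf{PrA}^-$: we need $b_2<y\to b_2+1\le y$, and that $b_2+1\le y$ with $b_2+1\neq y$ gives $b_2+1<y$. Both are short uniform lemmas, independent of $n$, proved once from cancellation and the predecessor axiom. A second detail is how the nested existentials in $\mathit{Div}_n$ are instantiated, but in any of the polynomially equivalent proof systems this costs only polynomial overhead.
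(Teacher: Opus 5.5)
Your proof is correct and is essentially the paper's argument. The paper also proceeds by induction on $x$ for $\mathit{Div}_n(x)$, with base witnesses $a_1=b_1=b_2=0$ and a two-case step (increment the remainder, or increment the multiplier using the left-distributivity supplied by $\mathit{Hyp}_n(y)$), and it gets polynomial size by treating $\mathit{Mul}_n$ as an opaque predicate in a fixed proof template. Your version is only more explicit: you split on $b_2+1<y$ versus $b_2+1=y$ to avoid subtraction, and you spell out how clauses 1, 3 and 4 of $\mathit{Hyp}_n(y)$ yield $\mathit{Mul}_n(a_1+1,y,b_1+y)$.
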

\begin{proof} The idea of our proof for the lemma is to form a single schema of a proof in $\mathsf{PrA}$. Namely, the proofs $P_n$ of $\forall x Div_n (x)$ will be such that there is a single proof template $P(R^{(3)})$ built from formulas of the language of $\mathsf{PrA}$ extended by a single ternary predicate letter $R^{(3)}$, and for each $n$, when we replace $R^{(3)}$ with $\mathit{Mul}_n$, we obtain precisely $P_n$. This implies that the lengths of $P_n$ depend linearly on the lengths of $\mathit{Mul}_n$, and hence the lengths of $P_n$ are polynomial in $n$.

The proof $P_n$ in $\mathsf{PrA}$ is a proof of $Div_n (x)$ by induction on $x$. Recall that the definition of $\mathit{Div}_n(x)$ is: $$\forall y ( \mathit{Hyp}_n (y)\rightarrow (y=0) \vee \exists a_1, b_1, b_2(\mathit{Mul}_{n}(a_1,y,b_1)\wedge x=b_1+b_2\wedge b_2<y)).$$

First, let us prove $Div_n (x)$ for the base case of $x=0$.
We take $a_1$, $b_1$, and $b_2$ to all be equal to $0$. Now, since clearly \[\forall y ( \mathit{Hyp}_n (y)\rightarrow (y=0) \vee (\mathit{Mul}_{n}(0,y,0)\wedge 0=0+0\wedge 0<y)),\]
we conclude $Div_n (0)$.

Now we verify the induction step. That is, we assume $Div_n (x)$ holds and show that $Div_n (x+1)$ holds.

To prove $Div_n (x+1)$, we assume $\mathit{Hyp}_n(y)$ for some $y>0$ and claim that there are $a_1$, $b_1$, and $b_2$ such that $\mathit{Mul}_n(a_1,y,b_1)$, $x+1=b_1+b_2$, and $b_2<y$. From the induction hypothesis, it follows that there are $a_1'$, $b_1'$, and $b_2'$ such that $\mathit{Mul}_n(a_1',y,b_1')$, $x=b_1'+b_2'$, and $b_2'<y$.

We consider two cases: $b_2'<y-1$ and $b_2'=y-1$.
If $b_2'<y-1$, then we clearly satisfy the target conditions by taking $a_1=a_1'$, $b_1=b_1'$, and  $b_2=b_2'+1$.

If $b_2'=y-1$, then we set $a_1=a_1'+1$, $b_1=b_1'+y$, and $b_2=0$. We use $\mathit{Hyp}_n(y)$ to verify that this triple $a_1,b_1,b_2$ is as required. We have $x=a_1'\cdot y+(y-1)$, which implies \[x+1=a_1'\cdot y+(y-1)+1=a_1'\cdot y+y=(a_1'+1)\cdot y=a_1\cdot y+0,\] yielding the result. Notice that the third equality is provable directly since, by $\mathit{Hyp}_n(y)$ in the implication, we have distributivity to the left.
\end{proof}

For the following lemma, we use the construction of models of $\mathsf{PrA}^{-}$ from {\cite[p.~329]{Smo91}}.
\begin{lemma}

In $\mathsf{PrA}_{\mathit{alt}}$, the proof of $\forall x Div_n (x)$ must use at least one axiom whose size is greater than $2^{2^{n}-1}$.
\end{lemma}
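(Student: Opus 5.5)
The plan is a semantic argument: build a model that satisfies every short axiom but falsifies $\forall x\,\mathit{Div}_n(x)$. Any proof in $\mathsf{PrA}_{\mathit{alt}}$ uses finitely many axioms. If all of them have size at most $2^{2^n-1}$, then only modulo-comparison axioms for primes $p$ with $p\le 2^{2^n-1}$ (up to a constant factor) can occur. This is because the axiom for $p$ contains the disjunction $x\equiv_p 0\vee\dots\vee x\equiv_p p-1$, which has at least $p$ disjuncts, so its size is at least linear in $p$. The proof would then be a proof of $\forall x\,\mathit{Div}_n(x)$ in $\mathsf{PrA}^-$ plus these finitely many modular axioms. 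By soundness, it therefore suffices to find a model $M$ of $\mathsf{PrA}^-$ with two properties:
\begin{itemize}
\item every $x\in M$ has a residue modulo every prime $p<2^{2^n}$;
\item some $x\in M$ has no residue modulo some particular prime $q\le 2^{2^n}$.
\end{itemize}
Such an $M$ satisfies all the axioms used in the supposed proof but falsifies $\mathit{Div}_n(x)$. Here we use the remark after the definition of $\mathit{Div}_n$: in every model of $\mathsf{PrA}^-$ the formula $\mathit{Div}_n(x)$ states exactly that $x$ is divisible with remainder by every standard $y\le 2^{2^n}$. This relies on the correctness of $\mathit{Mul}_n$ and $\mathit{Hyp}_n$ in all models of $\mathsf{PrA}^-$, which we take from the lemma on bounded multiplication.

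To choose $q$, I would use Bertrand's postulate. It gives a prime $q$ with $2^{2^n-1}<q\le 2^{2^n}$, so $q\le 2^{2^n}$ is one of the divisors that $\mathit{Div}_n$ tests, while the modular axiom for $q$ has size larger than $2^{2^n-1}$. The exact size convention for the axiom for $q$ (how $\underbrace{z+\dots+z}$ and the numerals are written) only needs to make its length grow at least like $q$. I would fix this counting explicitly so that every axiom of size at most $2^{2^n-1}$ concerns a prime $p<q$.

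For the model I would use the construction from \cite[p.~329]{Smo91}. Take $M$ to be the set of elements $m+\frac{k}{q}\,t$ in the ordered group $\mathbb{Q}[t]$ (or $\mathbb{Z}[\tfrac1q]$-coefficients on an infinite element $t$) that are nonnegative, with $m\in\mathbb{Z}$ and $k\in\mathbb{Z}[1/p:p\neq q]$ suitably restricted. A cleaner choice is
\[M=\{a+bt : a\in\mathbb{Z},\ b\in \mathbb{Z}_{(q)},\ b>0\}\cup\mathbb{N},\]
where $\mathbb{Z}_{(q)}$ denotes the rationals whose denominators are prime to $q$. We then check the following.
\begin{itemize}
\item $M$ is a cancellative commutative monoid under $+$, and it is closed under addition.
\item Every nonzero element has a predecessor, and $x+1\neq 0$.
\item The relation $\le$ is total, since it is the restriction of the lexicographic order on $\mathbb{Z}+\mathbb{Q}t$ and differences of elements stay in $M$.
\end{itemize}
So $M\models\mathsf{PrA}^-$. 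Divisibility modulo $p\neq q$ holds: for $a+bt$, take $r\equiv a\pmod p$ and write $a+bt=p\bigl(\frac{a-r}{p}+\frac bp t\bigr)+r$, where $b/p\in\mathbb{Z}_{(q)}$. For $q$ itself the element $t$ has no residue. If $t=qc+r$ with $c=a'+b't$, then $qb'=1$, which forces $b'=1/q\notin\mathbb{Z}_{(q)}$.

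The main obstacle I expect is making this model fully verifiably a model of $\mathsf{PrA}^-$, especially axiom 4 and the predecessor axiom, together with handling the boundary cases of the size bookkeeping. If the $\mathbb{Z}_{(q)}$ model causes trouble, I would fall back on Smoryński's model built literally with coefficients in the localization. I would then argue that $\mathit{Div}_n$ fails at $t$ because $\mathit{Hyp}_n(q)$ holds: $q$ is standard and $q\le 2^{2^n}$, and the formulas are absolute for standard $y$.
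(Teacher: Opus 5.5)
Your proof is correct and follows essentially the paper's route: a Smory\'nski-style model of $\mathsf{PrA}^-$ with an infinite element $t$ serving as the witness that refutes $\mathit{Div}_n$, with Bertrand's postulate supplying a prime $q\in(2^{2^n-1},2^{2^n}]$. The one variation is that you take coefficients in $\mathbb{Z}_{(q)}$, whereas the paper uses positive rationals whose denominators involve only primes below the chosen prime; your model therefore satisfies every modular axiom except the one for $q$, so the size bookkeeping reduces to noting that the $q$-axiom has at least $q>2^{2^n-1}$ symbols. Your first bullet should read ``every prime $p\neq q$'' (or $p<q$), not ``every prime $p<2^{2^n}$'', since as written it would include $q$ itself whenever $q<2^{2^n}$.
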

\begin{proof}
Let us fix an arbitrary $n$ and some prime natural number $p$ smaller than $2^{2^{n}}$.

The strategy of this proof shall be to construct for every prime $p$ a model $M_p$ that satisfies all the axioms of $\mathsf{PrA}^-$ and the axioms $\forall x \; (x\equiv_m 0 \vee ... \vee x\equiv_m m-1)$ for primes $m<p-1$, but will not satisfy the axiom $\forall x \; (x\equiv_p 0 \vee ... \vee x\equiv_p p-1)$. This will imply that $M_p$ does not satisfy $\forall x\mathit{Div}_n(x)$.

We denote by $p_0$, $p_1$, ..., $p_t$ the sequence of primes strictly smaller than $p$.
Let us consider the model $M_p$, whose elements are the naturals $\mathbb{N}$ \emph{and} all polynomials of the following form:
$$a_1.X+a_0$$
where $a_0$ is an integer,
and $a_1$ is of the form $$\frac{q}{p_0^{r_0}.p_1^{r_1}.\; ... \; p_t^{r_t}}$$
with $q$ being a positive natural, and all $r_i$ being natural numbers.

Then it is clear that this model satisfies $\mathsf{PrA}^-$.
It is also clear that it satisfies the axioms $\forall x \; (x\equiv_m 0 \vee ... \vee x\equiv_m m-1)$ for $m$ up to $p-1$. As in any model of $\mathsf{PrA}^-$, in $M_p$, we have $\mathit{Hyp}_n(p)$, since $p\le 2^{2^n}$.

Let us show it does not satisfy $\forall x Div_n (x)$.
Let us take $x=X$.
Suppose $X$ is divisible with a remainder.
i.e., $\exists m \; m.p+X=b \vee m.p+b=X$ for some $b<p$.
This implies $m.p=-X+b \vee m.p=X-b$.
But $X-b$, being a polynomial, is a multiple of the natural $p$ iff $p$ divides its coefficients $1$ and $-b$.
But since $\frac{1}{p}.X- \frac{b}{p}$ is not an element of the model, there cannot exist such an $m$ in the model.
Thus, $\forall x Div_n (x)$ fails, since it fails for $y=p$ and $x=X$.

These models $M_p$ exist for every $p<2^{2^{n}}$.
But, since Chebyshev's Theorem \cite{Che52} states that there is at least one prime between each positive natural and its double, we know that there is a prime $k$ between $2^{2^{n}-1}$ and $2^{2^{n}}$.
This means that in order to prove $\forall x Div_n (x)$ in $\mathsf{PrA}_{\mathit{alt}}$, we must at least use an axiom longer in size than the axiom $\forall x \; (x\equiv_k 0 \vee ... \vee x\equiv_k k-1)$, whose size is greater than $2^{2^{n}-1}$.
This implies that any proof of $\forall x Div_n (x)$ in $\mathsf{PrA}_{\mathit{alt}}$ must be longer than $2^{2^{n}-1}$.
\end{proof}
\begin{thm}[Speedup]
There is a $2^{2^{x^{\varepsilon}}}$ speedup of $\mathsf{PrA}$ over $\mathsf{PrA}_{\mathit{alt}}$.
\end{thm}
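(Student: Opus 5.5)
The plan is to combine the two preceding lemmas on the single family of sentences $\varphi_n := \forall x\,\mathit{Div}_n(x)$, and then reparametrize by the length of $\varphi_n$ so that the bound reads as a function of the size of the sentence (equivalently, of its short proof).

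\emph{Step 1: sizes of $\varphi_n$ and of its short proof.} By the lemma on bounded size definability of bounded multiplication, $\mathit{Mul}_n$ has size polynomial in $n$. Then $\mathit{Hyp}_n$ and $\mathit{Div}_n$ are built from a fixed number of occurrences of $\mathit{Mul}_n$ together with constant-size material. Hence $|\varphi_n|\le c\,n^k$ for some constants $c,k$. The lemma on proofs in $\mathsf{PrA}$ gives a proof of $\varphi_n$ of size at most $c'n^{k'}$. Since $\mathsf{PrA}$ and $\mathsf{PrA}_{\mathit{alt}}$ axiomatize the same complete theory $\mathrm{Th}(\mathbb N;=,0,1,+)$ (via quantifier elimination, as recalled in the introduction), $\varphi_n$ is also provable in $\mathsf{PrA}_{\mathit{alt}}$. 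So the comparison is meaningful.

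\emph{Step 2: lower bound in $\mathsf{PrA}_{\mathit{alt}}$.} By the preceding lemma, every $\mathsf{PrA}_{\mathit{alt}}$-proof of $\varphi_n$ contains an axiom of size greater than $2^{2^n-1}$. A proof is at least as long as any formula occurring in it, so the shortest such proof has length greater than $2^{2^n-1}\ge 2^{2^{n/2}}$ for $n\ge 2$.

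\emph{Step 3: reparametrization.} Let $x$ denote the size of the $\mathsf{PrA}$-proof, so $x\le c'n^{k'}$. Then $n\ge (x/c')^{1/k'}\ge x^{\varepsilon'}$ for large $x$ and any $\varepsilon'<1/k'$. Hence the $\mathsf{PrA}_{\mathit{alt}}$-proof length is at least
\[2^{2^{n/2}}\ge 2^{2^{x^{\varepsilon}}}\]
for a suitable $\varepsilon>0$ and all sufficiently large $n$. Finitely many small $n$ can be absorbed by shrinking $\varepsilon$ or by discarding them. This is precisely the asserted $2^{2^{x^{\varepsilon}}}$ speedup along the infinite family $(\varphi_n)$.

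The main obstacle is not the arithmetic but making sure the two earlier lemmas are applied with consistent size measures. The polynomial bound for $P_n$ relies on $P_n$ being a fixed template in which $\mathit{Mul}_n$ is substituted; this is what keeps the proof length linear in $|\mathit{Mul}_n|$. The lower bound relies on "longer than some axiom used" translating into "proof length". I would state explicitly that proof length counts symbols, so any proof containing a formula of size $L$ has length at least $L$, and that $|\mathit{Mul}_n|$ is polynomial in $n$ by Theorem~\ref{eff_def}. The rest is the monotone inversion above.
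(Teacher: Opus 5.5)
Your proposal is correct and follows the paper's proof. The paper likewise combines the polynomial-size $\mathsf{PrA}$-proof of $\forall x\,\mathit{Div}_n(x)$ with the $2^{2^n-1}$ lower bound on axiom size in $\mathsf{PrA}_{\mathit{alt}}$. Your explicit inversion $n\ge x^{\varepsilon'}$ (and the remark that $\varphi_n$ is provable in $\mathsf{PrA}_{\mathit{alt}}$ at all) simply spells out the step the paper compresses into ``this yields the result.''
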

\begin{proof}
The lemmas proved above give us that the formula $\forall x Div_n (x)$ is provable in a size polynomial in $n$ in the theory $\mathsf{PrA}$,
but the sizes of the proofs of this formula in $\mathsf{PrA}_{\mathit{alt}}$ are at least greater than $2^{2^{n}-1}$, since they must use axioms of at least this size.
This yields the result.
\end{proof}

\section{Speedup over the Theory of Real Closed Fields}
\begin{defi}
Let $\mathcal{L}_{\mathsf{RCF}}$ be the language of the theory of real closed fields in first-order logic, consisting of the constants $0$ and $1$, and the function symbols $+$ and $\cdot$.
\end{defi}

\begin{defi}
Let $\mathsf{RCF}^-$ be the theory consisting of the following axioms in $\mathcal{L}_{\mathsf{RCF}}$:
\begin{enumerate}
    \item  Axioms of fields.
    \item Axioms of linear order for $\leq$ compatible with the field operations, where $x\leq y$ is a shorthand for $\exists z \; x+z^2 = y$.
\end{enumerate}
\end{defi}

\begin{lemma}[On bounded size definability of bounded exponentiation]
In $\mathcal{L}_{\mathsf{RCF}}$, we can define the formulas $\mathit{Pow}_n (x,y,z)$ ($n$ being a natural number), which express in all models of $\mathsf{RCF}^-$ that $y$ is a natural number, $y\leq 2^{2^n}$, and $x^y=z$. These formulas are of polynomial size in $n$.
\end{lemma}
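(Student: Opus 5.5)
We mirror the construction of $\mathit{Mul}_n$ from the Presburger section, replacing the additive structure by the multiplicative one. The base formula $\mathit{Pow}_0(x,y,z)$ lists the finitely many admissible exponents explicitly:
\[(y=0\rightarrow z=1)\wedge(y=1\rightarrow z=x)\wedge(y=2\rightarrow z=x\cdot x)\wedge(y=3\rightarrow z=x\cdot x\cdot x)\wedge(y=4\rightarrow z=x\cdot x\cdot x\cdot x)\wedge\neg(y\ne 0\wedge\dots\wedge y\ne 4).\]
Here the numerals $2,3,4$ are the terms $1+1$, $1+1+1$ and $1+1+1+1$. Exponent $4$ is included only so that the natural range of the base case is $\{0,\ldots,2^{2^1}\}$; using $\{0,1,2\}$ as for $\mathit{Mul}_0$ works equally well with the bound shifted by one. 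In any model of $\mathsf{RCF}^-$ (an ordered field, hence of characteristic $0$), these numerals are pairwise distinct, so this formula says exactly that $y\in\{0,\dots,4\}$ is a standard natural and $z=x^y$.

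For $n>0$ we take $\mathit{Pow}_n(x,y,z)$ equivalent to
\[\exists y_1,y_2,y_3,y_4,z_1,z_2,z_4\big(y=y_3+y_4\wedge \mathit{Pow}_{n-1}(\cdots)\wedge\cdots\wedge z=z_2\cdot z_4\wedge y_2<y_1\big).\]
The new feature compared to $\mathit{Mul}_n$ is that the product $y_3=y_1\cdot y_2$ can now be written directly with the field multiplication. We still must insist that $y_1,y_2$ are naturals bounded by $2^{2^{n-1}}$. For this we use the conjuncts $\mathit{Pow}_{n-1}(1,y_1,1)$ and $\mathit{Pow}_{n-1}(1,y_2,1)$, or alternatively $\mathit{Pow}_{n-1}(x,y_1,z_1)$ itself, since its truth forces $y_1$ to be a natural in range. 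We then require $\mathit{Pow}_{n-1}(x,y_1,z_1)$, $\mathit{Pow}_{n-1}(z_1,y_2,z_2)$ and $\mathit{Pow}_{n-1}(x,y_4,z_4)$. By the induction hypothesis this gives $z_1=x^{y_1}$, $z_2=x^{y_1y_2}$, $z_4=x^{y_4}$, and hence $z=x^{y_1y_2+y_4}=x^y$.

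The semantic verification is identical to the one for $\mathit{Mul}_n$. Suppose the witnesses exist. Then $y=y_1y_2+y_4$ with $y_1,y_4\le 2^{2^{n-1}}$ and $y_2<y_1$, so $y$ is a natural and
\[y\le 2^{2^{n-1}}(2^{2^{n-1}}-1)+2^{2^{n-1}}=2^{2^n}.\]
Conversely, suppose $y\le 2^{2^n}$ is a natural. If $y=2^{2^n}$, take $y_1=y_4=2^{2^{n-1}}$ and $y_2=y_1-1$. Otherwise, take $y_1=2^{2^{n-1}}$ and let $y_2,y_4$ be the quotient and remainder of $y$ divided by $y_1$. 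The exponent laws $x^{a+b}=x^ax^b$ and $(x^a)^b=x^{ab}$ hold for standard naturals $a,b$ in any field, including the convention $0^0=1$. This confirms that the witnesses for the $z$'s exist and that $z$ is determined.

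The remaining concern is size. Written naively, the recursive clause contains four (or more) occurrences of $\mathit{Pow}_{n-1}$, so the formulas would grow exponentially. Here we invoke Theorem~\ref{eff_def}, taking $\Phi(R,x,y,z)$ to be the displayed matrix with $R$ in place of $\mathit{Pow}_{n-1}$. The language $\mathcal{L}_{\mathsf{RCF}}$ has equality, $0$, $1$ and the needed connectives, so the theorem applies. It produces formulas $\mathit{Pow}_n$ of size polynomial in $n$, together with polynomial-size proofs of the recursive equivalence. I expect the only delicate point to be making sure the naturalness of $y$ is genuinely forced at every level rather than merely assumed. The clause $y=y_1y_2+y_4$ achieves this by induction from the base case, since sums and products of standard naturals are standard naturals, and the base case explicitly restricts $y$ to finitely many numerals.
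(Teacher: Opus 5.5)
Your argument is correct. It shares the paper's overall architecture: a recursive clause splits the exponent into pieces within the range of $\mathit{Pow}_{n-1}$, and Solovay's theorem (Theorem~\ref{eff_def}) absorbs the repeated occurrences of $\mathit{Pow}_{n-1}$. The decomposition itself, however, is different.

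\textbf{The paper's decomposition.} The paper splits into cases. If $\mathit{Pow}_{n-1}(1,y,1)$ holds, it defers to $\mathit{Pow}_{n-1}(x,y,z)$. Otherwise it writes $y=y_3y_4+y_5$ with $y_3,y_4,y_5$ all in the previous range and sets $z=(x^{y_3})^{y_4}\cdot x^{y_5}$. To cap $y$ it relies on a separate conjunct $\exists y_1\bigl(\forall y_2(\mathit{Pow}_{n-1}(1,y_1,1)\wedge\mathit{Pow}_{n-1}(1,y_2,1)\to y_2\le y_1)\wedge y\le y_1\cdot y_1\bigr)$.

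\textbf{Your decomposition.} You transplant the Euclidean-division clause of $\mathit{Mul}_n$: $y=y_1y_2+y_4$ with $y_2<y_1$. This needs no case split and caps $y$ on its own. If $B$ is the previous bound, the admissible $y$ are exactly $0,\dots,B^2$.

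\textbf{What this buys you.} In the paper's clause, $\mathit{Pow}_{n-1}(1,y_1,1)$ sits in the antecedent of the implication. So any $y_1$ outside the previous range (e.g.\ $y_1=y+1$) satisfies the bounding conjunct vacuously. As a result, the paper's formula accepts, for instance, $y=6=2\cdot 2+2>2^{2^1}$ at level $n=1$. Your version matches the stated bound exactly.

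\textbf{One small slip.} With your base case $\{0,\dots,4\}$, the recursion $B_n=B_{n-1}^2$ gives $B_n=4^{2^n}=2^{2^{n+1}}$, not $2^{2^n}$. It is the alternative $\{0,1,2\}=\{0,\dots,2^{2^0}\}$, which is the paper's choice, that yields exactly $y\le 2^{2^n}$. So your remark about which option ``shifts the bound by one'' has it backwards.
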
 
\begin{proof}
We use the notation of Solovay's theorem. Let $\phi_0(\vec{a}, \vec{b} )$ be $\mathit{Pow}_0 (x,y,z)$. We define the formula $\mathit{Pow}_0 (x,y,z)$ as:
$$(y=0\rightarrow z=1) \wedge (y=1\rightarrow z=x) \wedge (y=2\rightarrow z=x\cdot x) \wedge \neg (y\neq 0 \wedge y\neq 1 \wedge y\neq 2)$$
Here, $2$ is a shorthand for $1+1$ in $\mathcal{L}_{\mathsf{RCF}}$. The final conjunct ensures that $y$ is a natural number up to $2$. Thus, this formula functions as intended.

For clarity, we define $\Phi$ directly in terms of $\phi_{n-1}$. Let $\mathit{Pow}_{n-1}$ be $\phi_{n-1}$. We define $\Phi(\mathit{Pow}_{n-1}(x,y,z),x,y,z)$ to be equivalent to:
$$
\begin{aligned}
\exists y_1 \Big(&\forall y_2 (\mathit{Pow}_{n-1}(1,y_1,1)\wedge \mathit{Pow}_{n-1}(1,y_2,1)\rightarrow y_2 \leq y_1)\wedge y\leq y_1\cdot y_1\Big)\\
&\wedge (\mathit{Pow}_{n-1}(1,y,1)\rightarrow \mathit{Pow}_{n-1}(x,y,z))\\
&\wedge \Big(\neg \mathit{Pow}_{n-1}(1,y,1)\rightarrow \exists y_3, y_4, y_5, z_2, z_3, z_4 \big( \\
&\quad \mathit{Pow}_{n-1}(1,y_3,1)\wedge \mathit{Pow}_{n-1}(1,y_4,1)\wedge \mathit{Pow}_{n-1}(1,y_5,1) \\
&\quad \wedge y=y_3\cdot y_4+y_5 \wedge \mathit{Pow}_{n-1}(x,y_3,z_2) \wedge \mathit{Pow}_{n-1}(z_2,y_4,z_3) \\
&\quad \wedge \mathit{Pow}_{n-1}(x,y_5,z_4)\wedge z=z_3\cdot z_4 \big)\Big)
\end{aligned}
$$
Notice that $\mathit{Pow}_{n-1}(1,y,1)$ is a shorthand for $\exists c,d \; (c=1 \wedge d=1 \wedge \mathit{Pow}_{n-1}(c,y,d))$.

The matrix of this formula can be understood as follows:
\begin{enumerate}
    \item The first part constructs $y_1$ such that $y_1\cdot y_1=2^{2^n}$ and states $y\leq y_1\cdot y_1$.
    \item The second part dictates that if $y\leq 2^{2^{n-1}}$, then $\mathit{Pow}_n(x,y,z)$ holds if and only if $\mathit{Pow}_{n-1}(x,y,z)$ holds.
    \item The final part explains how to evaluate $x^y$ when $y>2^{2^{n-1}}$. This is achieved by finding integer elements such that $y=y_3\cdot y_4+y_5$ with $y_3,y_4,y_5\leq 2^{2^{n-1}}$, allowing $\mathit{Pow}_{n-1}$ to be iterated. This relies on the identity $x^y = x^{y_3\cdot y_4+y_5} = (x^{y_3})^{y_4} \cdot x^{y_5} = z$, where all constituent exponentiations are well-defined within $\mathit{Pow}_{n-1}$.
\end{enumerate}

By Theorem \ref{eff_def}, this definition is equivalent to some formula $\phi_n$ of polynomial size in $n$.
\end{proof}

\begin{lemma}[Interpretation of $\mathit{Pow}_n$]
In any model satisfying the theory $\mathsf{RCF}^-$, for an integer $y\leq 2^{2^n}$, the formula $\mathit{Pow}_n(x,y,z)$ is true iff $x^y=z$, i.e., $x \cdot x \cdots x = z$ with $x$ appearing $y$ times in the product.
\end{lemma}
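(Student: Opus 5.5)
We argue by induction on $n$, proving the stronger claim that for every model $M$ of $\mathsf{RCF}^-$ and all $x,y,z\in M$, $\mathit{Pow}_n(x,y,z)$ holds iff $y$ is a standard natural number with $y\le 2^{2^n}$ and $z=x^y$. The "only if" direction gives the statement. We also need the converse, because the inductive step quantifies over elements satisfying $\mathit{Pow}_{n-1}(1,\cdot,1)$. The key consequence of the induction hypothesis is that $\mathit{Pow}_{n-1}(1,y,1)$ defines exactly the set $\{0,1,\dots,2^{2^{n-1}}\}$ of standard naturals in $M$, since $1^y=1$ always. Since $M$ is an ordered field, the standard naturals embed and are pairwise distinct, so "$y$ is the natural number $k$" is unambiguous.

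For the base case, $\mathit{Pow}_0$ forces $y\in\{0,1,2\}$, and in each case it forces $z=1,x,x\cdot x$ respectively. This is a direct check.

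For the inductive step, fix $x,y,z$.

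\emph{First conjunct.} Under the hypothesis, the first conjunct asks for some $y_1$ in the definable set $S=\{0,\dots,2^{2^{n-1}}\}$ that bounds every element of $S$, with $y\le y_1^2$. Hence $y_1=2^{2^{n-1}}$, and the conjunct says $y\le 2^{2^n}$. One caveat: as literally written, $\mathit{Pow}_{n-1}(1,y_1,1)$ sits inside the antecedent. If $y_1\notin S$, the inner universal is then vacuous, and $y_1$ could be large or nonstandard. I would read the formula with $\mathit{Pow}_{n-1}(1,y_1,1)$ as an outer conjunct, as the paper's gloss intends, and note this repair.

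\emph{Case $y\in S$.} The second conjunct applies, and the induction hypothesis gives $z=x^y$. The third conjunct is vacuous.

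\emph{Case $y\notin S$.} The third conjunct supplies $y_3,y_4,y_5\in S$ with $y=y_3y_4+y_5$. Thus $y$ is a standard natural. The induction hypothesis gives $z_2=x^{y_3}$, $z_3=z_2^{y_4}=x^{y_3y_4}$ and $z_4=x^{y_5}$. So $z=x^{y_3y_4}x^{y_5}=x^y$, using the usual exponent laws for standard exponents in any commutative ring. Combined with the first conjunct, $y$ is a standard natural at most $2^{2^n}$.

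\emph{Converse.} Given a standard natural $y\le 2^{2^n}$, we exhibit witnesses. Take $y_1=2^{2^{n-1}}$ for the first conjunct. If $y\le 2^{2^{n-1}}$, the induction hypothesis handles the second conjunct. Otherwise write $y=2^{2^{n-1}}\cdot q+r$ with $r<2^{2^{n-1}}$; then $q\le 2^{2^{n-1}}$, so $y_3=2^{2^{n-1}}$, $y_4=q$, $y_5=r$ all lie in $S$. The remaining witnesses are $z_2=x^{y_3}$, $z_3=z_2^{y_4}$, $z_4=x^{y_5}$.

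The main obstacle is the bookkeeping around the first conjunct: pinning $y_1$ down exactly, and making sure nonstandard $y$ are excluded. The exclusion of nonstandard $y$ comes only from the fact that in the remaining case $y$ is assembled from elements of the standard set $S$. I would also verify that the $\exists c,d$ shorthand for $\mathit{Pow}_{n-1}(1,y,1)$ introduces no problem; it does not, since $c=d=1$ is forced.
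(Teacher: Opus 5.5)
Your proof is correct and is the paper's induction on $n$ made explicit. The paper's one-line proof leaves the strengthening implicit, and your full characterization of $\mathit{Pow}_n$ on all triples is exactly what it needs, since the inductive step quantifies over every element satisfying $\mathit{Pow}_{n-1}(1,\cdot,1)$. Your diagnosis that the first conjunct is vacuous as written is right, but this lemma does not need the repair: with the literal formula the same induction goes through with the invariant ``$y$ is a standard natural $\le N_n$ and $z=x^y$'', where $N_0=2$ and $N_n=N_{n-1}^2+N_{n-1}\ge 2^{2^n}$ (the repair only matters for the paper's earlier claim that $\mathit{Pow}_n$ itself enforces $y\le 2^{2^n}$).
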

\begin{proof}
By induction on $n$.
\end{proof}

\begin{defi}
We define $\mathit{Hyp}_n(y)$ to be the conjunction of the following formulas:
\begin{enumerate}
    \item $\exists x,z \; (1<x \wedge \mathit{Pow}_n(x,y,z)\wedge z\leq 2)$
    \item $\forall x, x_1>1 \;\forall z, z_1 \; \Big((\mathit{Pow}_n (x,y,z)\wedge \mathit{Pow}_n(x_1,y,z_1)) \rightarrow z>x \wedge (z_1>z \rightarrow x_1>x)\Big)$
    \item $\forall c_1, c_2>1 \; \Big(c_1\neq c_2 \rightarrow \exists x_0,c_0 \; (\mathit{Pow}_n(x_0,y,c_0)\wedge (c_1<c_0<c_2 \vee c_2<c_0<c_1) \wedge x_0>1)\Big)$
\end{enumerate}
\end{defi}

The conjuncts of $\mathit{Hyp}_n(y)$ guarantee the necessary analytic behavior: proximity to 1, strict monotonicity, and density of $y$-th powers, respectively.

\begin{defi}
We define $\mathit{Root}_n$ as the formula: 
$$\forall y (\mathit{Hyp}_n(y) \rightarrow \neg(y>1)\vee \exists r \mathit{Pow}_{n}(r,y,2))$$
\end{defi}

This formula asserts that, under the analytic hypotheses, one can find an $r$ such that $2=r^y$ if $y>1$. Because $\mathit{Pow}_n$ is of polynomial size in $n$, the resulting formulas $\mathit{Hyp}_n(y)$ and $\mathit{Root}_n$ are clearly of polynomial size as well.

\begin{defi}
Let $\mathsf{RCF}$ be the theory $\mathsf{RCF}^-$ with the following axioms:

\begin{enumerate}
   \item  $\forall z\geq 0 \; \exists x \; x^2 - z=0$
\item  For each odd natural number $n$, we include the axiom:
$$\forall a_0, ... , a_{n-1} \exists x \; x^n + a_{n-1}x^{n-1} + \ldots + a_0 = 0 $$
where $x^i$ is a shorthand for $x \cdot x \cdots x$ ($i$ times).
\end{enumerate}
\end{defi}

\begin{defi}
Let $\mathsf{Tarski}$ be the theory $\mathsf{RCF}^-$ augmented with the least upper bound schema of axioms (LUB). 

For all formulas $\phi(x)$ with at least one free variable $x$, we have the axiom:
$$(\exists d \; \phi (d)\wedge (\exists b \forall x \; \phi (x) \rightarrow x\leq b)) \rightarrow \exists c \; (\forall x \; \phi (x) \rightarrow x\leq c)\wedge(\forall b (\forall x \; \phi (x) \rightarrow x\leq b)\rightarrow c\leq b)$$
\end{defi}

\subsection{Speedup}
\begin{lemma}
In $\mathsf{Tarski}$, the proof of $\mathit{Root}_n$ is of polynomial size in $n$.
\end{lemma}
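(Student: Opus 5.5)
As in the Presburger case, I will build a single proof template $P(R^{(3)})$ in the language $\mathcal{L}_{\mathsf{RCF}}$ extended by a ternary predicate letter $R^{(3)}$. Substituting $\mathit{Pow}_n$ for $R^{(3)}$ should yield a $\mathsf{Tarski}$-proof $P_n$ of $\mathit{Root}_n$. The template is fixed and uses only finitely many instances of the LUB schema, each instance having the form $\psi(R)$ for a fixed $\psi$. Hence the length of $P_n$ is linear in the length of $\mathit{Pow}_n$, which is polynomial in $n$ by the lemma on bounded exponentiation. The one point to check is that no step of the template depends on what $y$ is beyond the conjuncts of $\mathit{Hyp}_n(y)$, read as properties of the abstract relation $R(\cdot,y,\cdot)$.

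Inside the template, fix $y$ with $\mathit{Hyp}_n(y)$ and $y>1$, and apply LUB to
\[\phi(x) :\equiv x>1 \wedge \exists z\,(\mathit{Pow}_n(x,y,z)\wedge z\le 2).\]
Conjunct (1) of $\mathit{Hyp}_n(y)$ shows that $\phi$ is nonempty. For the upper bound I take $b=2$: if $x>2$ and $\mathit{Pow}_n(x,y,z)$, then conjunct (2) gives $z>x>2$, so $\phi(x)$ fails. LUB then yields a supremum $r$, and $r>1$. Next I need a value $s$ with $\mathit{Pow}_n(r,y,s)$. I would get this by adding to $\mathit{Hyp}_n$ (or deriving from a uniform template lemma about $\mathit{Pow}$) totality: for every $x>1$ there is $z$ with $\mathit{Pow}_n(x,y,z)$. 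If the stated conjuncts do not give totality, I would prove it by a further LUB argument on the set of $x>1$ having a power, using density (3). The fallback is to strengthen $\mathit{Hyp}_n$ slightly; the subsequent lower-bound lemma would remain true under such a change, since the added conjunct holds for genuine integers in any model.

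With $s$ in hand, I show $s=2$ by excluding both strict inequalities, using only monotonicity (2) and density (3).

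\textbf{Case $s<2$.} Density (3) with $c_1=s$, $c_2=2$ gives $x_0>1$ with $\mathit{Pow}_n(x_0,y,c_0)$ and $s<c_0<2$. Then $\phi(x_0)$ holds. Since $c_0>s$, the contrapositive monotonicity clause gives $x_0>r$, contradicting that $r$ is an upper bound of $\phi$.

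\textbf{Case $s>2$.} Density with $c_1=2$, $c_2=s$ gives $x_0>1$ with power $c_0$ and $2<c_0<s$. I claim $x_0$ is an upper bound of $\phi$. Take $x$ with $\phi(x)$ and power $z\le 2<c_0$. If $x>x_0$, the contrapositive of (2) applied to the pair $(x_0,x)$ would require $z\ge c_0$; this needs monotonicity in the direction that a larger base gives a larger power, so I would first derive from (2) and totality that the map $x\mapsto z$ is strictly increasing. Given that, $x\le x_0$. Also $r\le x_0$ because $r$ is the least upper bound, and $x_0<r$ because $c_0<s$ and the map is monotone. This is a contradiction. Hence $s=2$, so $\mathit{Pow}_n(r,y,2)$ and $\mathit{Root}_n$ follows.

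The main obstacle is this monotonicity bookkeeping together with the existence of $r^y$. Conjunct (2) as written only directly gives "larger value implies larger base", and I will need the converse (via trichotomy and the uniqueness of the value, which itself must be obtained from (2)) as well as totality at the supremum. I would handle these as small fixed lemmas in the template: derive uniqueness and strict increase from (2) by case analysis on the order, and obtain totality from a secondary LUB instance if necessary. Each such lemma is a fixed proof schema in $R$, so the polynomial bound is preserved.
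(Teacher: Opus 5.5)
Your skeleton is the paper's: the same LUB instance for $\phi(x)$, the upper bound $b=2$, the two density cases, and polynomial size coming from uniformity in $\mathit{Pow}_n$.

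The monotonicity detour is unnecessary. In the case $s>2$, take $x$ with $\phi(x)$ and value $z\le 2<c_0$. Conjunct (2) instantiated with $(x_1,z_1)=(x_0,c_0)$ gives $x<x_0$ outright, and $c_0<s$ gives $x_0<r$ the same way. The paper uses (2) in exactly this direction in both cases. So drop the planned strict-increase lemma. It does not follow from (2) anyway: a continuous non-decreasing $f$ with $f(x)>x$, $f(x)\to 1$ as $x\to 1$, and a flat piece satisfies (1)--(3).

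The genuine issue is the other one you flag: the existence of $s$ with $\mathit{Pow}_n(r,y,s)$. The paper passes over it (``let $c_1=c^y$''). Your proposed remedy, a further LUB argument using density, cannot work. Nor can anything that uses the conjuncts of $\mathit{Hyp}_n$ only as properties of an abstract $R$. In $\mathbb{R}$ every LUB instance holds whatever $R$ is. Let $R(x,3,z)$ hold iff $z=x^3$ and $x\neq 2^{1/3}$. Then (1)--(3) hold for $y=3$, while $\exists r\,R(r,3,2)$ fails. Hence no template $P(R)$ proves $\mathit{Root}_n$.

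Totality has to come from the recursive structure of $\mathit{Pow}_n$. Prove $T_k\colon\ \forall x\,\forall y\,(\mathit{Pow}_k(1,y,1)\to\exists z\,\mathit{Pow}_k(x,y,z))$ for $k=0,\dots,n$ in turn:
\begin{itemize}
\item $T_0$ is immediate from the definition of $\mathit{Pow}_0$.
\item $T_k$ follows by unfolding $\mathit{Pow}_k(1,y,1)$ via Solovay's equivalence. Then apply $T_{k-1}$ either to $y$ itself or to $y_3,y_4,y_5$, setting $z=z_3z_4$.
\end{itemize}
Each step is a fixed argument about $\mathit{Pow}_{k-1}$, so the whole chain has size $\sum_{k\le n}\mathrm{poly}(k)$. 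One more unfolding gives $\mathit{Pow}_n(x,y,z)\to\mathit{Pow}_n(1,y,1)$, and conjunct (1) of $\mathit{Hyp}_n(y)$ then yields $r^y$.

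Your fallback of adding totality to $\mathit{Hyp}_n$ also works and leaves the lower-bound lemma intact. But then you are proving a modified $\mathit{Root}_n$, not the stated one.
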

\begin{proof}
Recall that the formula $\mathit{Root}_n$ is defined as $$\forall y (\mathit{Hyp}_n(y) \rightarrow \neg(y>1)\vee \exists r \mathit{Pow}_{n}(r,y,2))$$
Consider an arbitrary integer $y$ that satisfies the hypotheses; we must prove that there exists an $r$ such that $\mathit{Pow}_{n}(r,y,2)$. We invoke the LUB property on the formula $\phi(x)$ defined as: 
$$\phi(x) := \exists z \;\big ( 1< x \wedge \mathit{Pow}_{n}(x,y,z)\wedge z\leq 2\big )$$
By the proximity to $1$ hypothesis, $\exists d \; \phi (d)$ holds. Furthermore, the strict monotonicity hypothesis ensures that $b=2$ functions as an upper bound (i.e., $\forall x \; \phi (x) \rightarrow x\leq 2$), because any $a>2$ yields $a^y>2$. Thus, the least upper bound $c$ exists.

We now demonstrate that $c$ is the required root. Assume for contradiction that $c^y \neq 2$. Let $c_1 = c^y$. By the density hypothesis, taking $c_2=2$ implies that there exists an $x_0 > 1$ where $x_0^y=c_0$, such that $c_0$ lies strictly between $c_1$ and $2$. This produces a contradiction in both possible cases:
\begin{enumerate}
    \item \textbf{Case $c^y< 2$:} By strict monotonicity, $x_0>c$, which means $c$ cannot be an upper bound.
    \item \textbf{Case $c^y> 2$:} By strict monotonicity, $x_0$ acts as an upper bound for all $x$ satisfying $\phi(x)$. Consequently, $c$ is not the \emph{least} upper bound.
\end{enumerate}
This concludes the proof of $\mathit{Root}_n$.

Because these proofs of $\mathit{Root}_n$ require a fixed, finite sequence of logical steps invoking polynomial-sized formulas like $\mathit{Pow}_n$, the length of the proof scales polynomially in $n$.
\end{proof}

\begin{lemma}\thlabel{RCFmodelsHyp}
Suppose $M$ is a submodel of $\mathsf{RCF}^-$ canonically embedded in the real numbers $\mathbb{R}$ such that $M$ contains the set of rational numbers $\mathbb{Q}$. Then, for any integer $1<y\leq 2^{2^{n}}$, $M$ satisfies the hypotheses of $\mathit{Root}_n$.
\end{lemma}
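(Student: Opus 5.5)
We need to verify the three conjuncts of $\mathit{Hyp}_n(y)$ in $M$, where $M\subseteq\mathbb{R}$ is an ordered subfield containing $\mathbb{Q}$ (a model of $\mathsf{RCF}^-$), and $1<y\le 2^{2^n}$ is an integer. By the Lemma on the interpretation of $\mathit{Pow}_n$, inside $M$ the formula $\mathit{Pow}_n(x,y,z)$ holds iff $z=x^y$, with $x^y$ computed by the field multiplication of $M$. Since $M$ is a subfield of $\mathbb{R}$, this agrees with real exponentiation. So each conjunct reduces to an elementary statement about the map $x\mapsto x^y$ restricted to $M$. In each case we exhibit witnesses in $\mathbb{Q}\subseteq M$ rather than relying on roots existing in $M$.

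\emph{Proximity to $1$.} We need some $x\in M$ with $1<x$ and $x^y\le 2$. Take $x=1+\frac1{y}\in\mathbb{Q}$. Then $x^y=(1+1/y)^y<e$, which is not quite enough, so instead take $x=1+\frac{1}{2y}$. By the elementary bound $(1+t)^y\le 1/(1-yt)$ for $0\le yt<1$ (or simply by Bernoulli-type estimates), we get $x^y\le 2$. Then $z=x^y\in M$ witnesses the conjunct.

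\emph{Strict monotonicity.} Let $x,x_1>1$ with $z=x^y$ and $z_1=x_1^y$. Since $y\ge 2$ and $x>1$, we have $x^y>x$, so $z>x$. Moreover, $t\mapsto t^y$ is strictly increasing on the positive reals. Hence $z_1>z$ forces $x_1>x$, because $x_1\le x$ would give $z_1\le z$. This is pure real-number reasoning, valid because $M$ is embedded in $\mathbb{R}$ and the operations agree.

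\emph{Density of $y$-th powers.} This is the only step needing care. Given $c_1\ne c_2$ in $M$, both greater than $1$, say $c_1<c_2$, we need $x_0\in M$ with $x_0>1$ and $c_1<x_0^y<c_2$. Work in $\mathbb{R}$: the real roots satisfy $1<c_1^{1/y}<c_2^{1/y}$. By density of $\mathbb{Q}$ in $\mathbb{R}$, choose a rational $x_0$ strictly between them. Then $x_0\in M$, $x_0>1$, and by strict monotonicity $c_1<x_0^y<c_2$. Finally set $c_0=x_0^y\in M$.

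The main obstacle is exactly this density conjunct: $M$ need not contain $y$-th roots, so we cannot pass to roots inside $M$. The resolution is to take roots in the ambient $\mathbb{R}$ and use $\mathbb{Q}\subseteq M$ to pull a rational approximant back into $M$. Since $\mathit{Pow}_n$ is evaluated correctly in $M$ by the interpretation lemma, all three conjuncts, and hence $\mathit{Hyp}_n(y)$, hold in $M$.
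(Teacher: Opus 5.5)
Your proof is correct and takes the same route as the paper, which only remarks that strict monotonicity is inherited from $\mathbb{R}$ while proximity to $1$ and density come from $\mathbb{Q}\subseteq M$; your rational witnesses $1+\frac{1}{2y}$ and a rational strictly between $c_1^{1/y}$ and $c_2^{1/y}$ supply the details the paper leaves out. One point you leave implicit is that the order in $\mathit{Hyp}_n$ is the square-defined one; it agrees with the real order on $M$ because $M\models\mathsf{RCF}^-$ forces every nonnegative element of $M$ to be a square in $M$.
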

\begin{proof}
The strict monotonicity hypothesis applies universally to any subset of the reals. The proximity to $1$ and density hypotheses are guaranteed by the inclusion of the rationals.
\end{proof}

We rely on a classical theorem from S. Lang \cite{Lang12} for the subsequent lemma.
\begin{thm}[{\cite[p.~297]{Lang12}}]\thlabel{Lang thm}
Let $K$ be a field, and $n\geq 2$ an integer. Let $a\in K$, $a\neq 0$. Assume that for all prime numbers $p$ such that $p\vert n$, $a$ is not a p-th power in $K$ (i.e., $\forall x\in K \; x^p\neq a$), and assume that if $4\vert n$ then $\forall x\in K \; a\neq -4x^4$.

Then the polynomial $X^n - a$ is irreducible in the ring of polynomials $K[X]$.
\end{thm}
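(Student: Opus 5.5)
The statement is the classical irreducibility criterion for binomials, and I would prove it in three steps: the prime case, then prime powers by induction on the exponent, then general $n$. Throughout, fix an algebraic closure $\overline{K}$ and a root $\alpha\in\overline{K}$ of $X^n-a$. Irreducibility of $X^n-a$ is equivalent to $[K(\alpha):K]=n$.

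\emph{Reduction to prime powers.} Write $n=\prod_i p_i^{k_i}$. For each $i$, the element $\alpha_i=\alpha^{n/p_i^{k_i}}$ is a root of $X^{p_i^{k_i}}-a$. The hypotheses of the theorem for $n$ imply those for $p_i^{k_i}$: the only prime dividing $p_i^{k_i}$ is $p_i$, and $4\mid p_i^{k_i}$ implies $4\mid n$. So once the prime-power case is known, $[K(\alpha_i):K]=p_i^{k_i}$. Since $K(\alpha_i)\subseteq K(\alpha)$, every $p_i^{k_i}$ divides $[K(\alpha):K]\le n$, and therefore the degree equals $n$.

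\emph{Prime case $n=p$.} Suppose $X^p-a=g h$ with $g$ monic of degree $0<d<p$. The roots of $X^p-a$ are $\zeta\alpha$ with $\zeta^p=1$, so the constant term $c\in K$ of $g$ satisfies $(-1)^d c=\zeta\alpha^d$ for some $p$-th root of unity $\zeta$. Raising to the $p$-th power gives $c^p=\pm a^d$. Choose $u,v$ with $ud+vp=1$; then $a=a^{ud}a^{vp}$ is $\pm$ a $p$-th power in $K$.
\begin{itemize}
\item For odd $p$ the sign is absorbed, since $-1=(-1)^p$, which contradicts the hypothesis.
\item For $p=2$ we have $d=1$, so $g$ is linear and $a$ is directly a square, again a contradiction.
\item In characteristic $p$ the same computation works with $\zeta=1$.
\end{itemize}

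\emph{Prime powers $n=p^k$, induction on $k$.} Put $\beta=\alpha^{p^{k-1}}$, so $\beta^p=a$ and $[K(\beta):K]=p$ by the prime case. The idea is to apply the induction hypothesis over the field $L=K(\beta)$ to the polynomial $X^{p^{k-1}}-\beta$, of which $\alpha$ is a root. This would give $[K(\alpha):K]=p\cdot p^{k-1}$. To do this I must verify two hypotheses over $L$: first, that $\beta$ is not a $p$-th power in $L$; second, when $p=2$ and $k\ge 3$, that $\beta\neq-4\gamma^4$ for all $\gamma\in L$.
\begin{itemize}
\item \emph{Odd $p$.} If $\beta=\gamma^p$, take norms $N_{L/K}$. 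The minimal polynomial of $\beta$ is $X^p-a$, so $N(\beta)=(-1)^{p+1}a=a$. Hence $a=N(\gamma)^p$, a contradiction.
\item \emph{$p=2$, first condition.} Write an element of $L=K(\beta)$ as $c+d\beta$. If $\beta=(c+d\beta)^2$, comparing coordinates gives $c^2+d^2a=0$ and $2cd=1$. Hence $d=1/(2c)$ and $a=-c^2/d^2=-4c^4$, contradicting the hypothesis since $4\mid n$.
\item \emph{$p=2$ in characteristic $2$.} Here $(c+d\beta)^2=c^2+d^2a\in K$ while $\beta\notin K$, so $\beta$ is again not a square.
\item \emph{$p=2$, $k\ge 3$, second condition.} If $\beta=-4\gamma^4$, then $-\beta=(2\gamma^2)^2$ is a square in $L$. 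But $-\beta$ is also a root of $X^2-a$ generating $L$, so the previous computation applied to $-\beta$ again forces $a=-4c^4$.
\end{itemize}

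The main obstacle is this $p=2$ bookkeeping. One has to see that the exceptional condition $a\neq-4x^4$ is exactly what prevents $\beta$ from becoming a square in $K(\sqrt a)$. One must also check that the induction hypothesis over $L$ requires the $-4\gamma^4$ condition only when $4\mid 2^{k-1}$, which is why that condition is needed only for $k\ge 3$.
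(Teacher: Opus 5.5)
Your proof is correct. The prime case via the constant term of a putative factor checks out, and so does the induction over $L=K(\alpha^{p^{k-1}})$: the norm handles odd $p$, and the explicit computation in the basis $1,\beta$ handles $p=2$. That computation is exactly where $a\notin -4K^4$ is used, and you correctly reduce the $-4\gamma^4$ condition over $L$ to $-\beta$ being a square. The assembly of the coprime prime powers by degree divisibility is also right, and all steps hold in characteristic $p$ too.

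The paper itself gives no proof of this statement---it only cites Lang---and your argument is essentially the standard textbook proof. Note also that the paper only ever applies the theorem with $n=p$ prime (to $X^p-2$ in the field-extension lemma), so for the paper's purposes your prime-case paragraph alone would suffice.
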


\begin{lemma}\thlabel{FieldExt}
Let $A$ be a field extension of the rational numbers $\mathbb{Q}$. Let $A(\xi)$ be the field extension of $A$ with $\xi$, where $\xi$ is a root of some polynomial $P(X)$ in $A[X]$. Suppose the degree of $P(X)$ is strictly smaller than the prime integer $p$, and suppose $A$ does not contain any root of the polynomial $X^p-2$.

Then the field $A(\xi)$ does not contain any root of the polynomial $X^p-2$.
\end{lemma}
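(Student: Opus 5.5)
Suppose, aiming for a contradiction, that some $\beta\in A(\xi)$ satisfies $\beta^p=2$. The argument compares degrees of field extensions. We may assume $\xi$ is a root of an irreducible factor of $P$; its minimal polynomial over $A$ has degree $d=[A(\xi):A]\le\deg P<p$.

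First, \thref{Lang thm} applied with $K=A$, $n=p$ and $a=2$ shows that $X^p-2$ is irreducible over $A$. The only prime dividing $p$ is $p$ itself, and by hypothesis $2$ is not a $p$-th power in $A$. If $p=2$ then $4\nmid p$, so the extra condition on $-4x^4$ never arises; the same holds for odd $p$. Since $\beta$ is a root of an irreducible degree-$p$ polynomial over $A$, we get $[A(\beta):A]=p$.

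Next, $\beta$ lies in $A(\xi)$, so $A\subseteq A(\beta)\subseteq A(\xi)$. The tower law then gives
\[ d=[A(\xi):A]=[A(\xi):A(\beta)]\cdot[A(\beta):A]=[A(\xi):A(\beta)]\cdot p. \]
Hence $p$ divides $d$, so $p\le d$. This contradicts $d<p$, and therefore $A(\xi)$ contains no root of $X^p-2$.

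The main point to handle carefully is the bookkeeping on degrees. We must make sure $[A(\xi):A]$ is bounded by $\deg P$ even when $P$ is reducible, which we get by passing to the minimal polynomial of $\xi$. We also need $d<p$ to hold strictly, with $\deg P\ge 1$ and $p$ prime. The rest is a direct application of \thref{Lang thm} and the multiplicativity of degrees in towers. The assumption that $A$ extends $\mathbb{Q}$ is used only to make $2\neq 0$, as \thref{Lang thm} requires.
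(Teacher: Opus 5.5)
Your proof is correct and takes essentially the same route as the paper. Lang's theorem makes $X^p-2$ irreducible over $A$, so a root $\beta\in A(\xi)$ would have $[A(\beta):A]=p$, and the tower law would then force $p \mid [A(\xi):A]<p$. Your extra bookkeeping (passing to the minimal polynomial of $\xi$, and checking $4\nmid p$ and $2\neq 0$) only makes explicit what the paper leaves implicit.
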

\begin{proof}
Because the degree of $P(X)$ is strictly smaller than $p$, the degree of the extension $A(\xi)$ over $A$ is strictly smaller than $p$, as the degree of the extension equals the degree of the minimal polynomial of $\xi$.

Assume for contradiction that $A(\xi)$ contains a root of $X^p-2$, denoted by $\beta$. Hence, $\beta$ is algebraic over $A$. Consider the minimal polynomial $P_{\beta}$ over $A$; $P_{\beta}$ resides in $A[X]$. Because $P_{\beta}$ is minimal, it divides all polynomials sharing $\beta$ as a root. 

If $P_{\beta} \neq X^p-2$, then $P_{\beta}$ divides $X^p-2$. However, by Theorem \ref{Lang thm}, $X^p-2$ is irreducible in $A$, meaning $P_{\beta}$ and $X^p-2$ must be scalar multiples. Since $P_{\beta}$ is minimal, it is monic, rendering $P_{\beta} = X^p-2$.

By the multiplicativity of degrees of field extensions, the degree of $P_{\beta}$ must divide the degree of the extension $A(\xi)$ over $A$. Yet the degree of $P_{\beta}$ is exactly $p$, while the extension degree is strictly smaller than $p$. This contradiction ensures $A(\xi)$ does not contain a root of $X^p-2$.
\end{proof}

\begin{lemma}
In $\mathsf{RCF}$, the proof of $\mathit{Root}_n$ must use an axiom whose size is at least $2^{2^{n}-1}$.
\end{lemma}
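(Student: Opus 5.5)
Following the pattern of the corresponding lemma for $\mathsf{PrA}_{\mathit{alt}}$, I will use a model-theoretic separation. Fix $n$ and, by Chebyshev's Theorem, a prime $p$ with $2^{2^n-1}<p\le 2^{2^n}$. The goal is to build a field $M_p\subseteq\mathbb{R}$ containing $\mathbb{Q}$ in which three things hold. First, $M_p$ satisfies $\mathsf{RCF}^-$ together with the square-root axiom. Second, it satisfies every odd-degree root axiom of degree $<p$. Third, $M_p$ contains no root of $X^p-2$. Since any proof of $\mathit{Root}_n$ in $\mathsf{RCF}$ uses only finitely many axioms, a proof using only axioms of size $<2^{2^n-1}$ would use only odd-degree axioms of degree $<p$, because the degree-$p$ axiom has size at least $p$. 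Such a proof would then yield $M_p\models\mathit{Root}_n$, which I will refute.

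To construct $M_p$, I take $A_0=\mathbb{Q}$ and build an increasing chain $A_0\subseteq A_1\subseteq\cdots\subseteq\mathbb{R}$. At each stage I adjoin a real root $\xi$ of some polynomial of degree $<p$ over the current field. The polynomial is either $X^2-a$ with $a\ge 0$, or a monic odd-degree polynomial of degree $<p$; the latter has a real root by the intermediate value theorem in $\mathbb{R}$. I dovetail these steps so that every such polynomial over every $A_k$ eventually receives a root. Then I let $M_p=\bigcup_k A_k$.

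I will check each required property of $M_p$ in turn.
\begin{enumerate}
\item $M_p$ is an ordered subfield of $\mathbb{R}$. The order given by $x\le y\iff\exists z\,(x+z^2=y)$ coincides with the real order, because nonnegative elements have square roots in $M_p$. Hence $\mathsf{RCF}^-$ and the square-root axiom hold.
\item The odd-degree axioms of degree $<p$ hold by construction.
\item No $A_k$ contains a root of $X^p-2$. This follows by induction on $k$ from Lemma \ref{FieldExt}, since each step adjoins a root of a polynomial of degree $<p$ (note $2<p$). The base case uses that $\mathbb{Q}$ has no root of $X^p-2$, as $2^{1/p}$ is irrational. Because any element of $M_p$ lies in some $A_k$, $M_p$ has no root of $X^p-2$ either.
\end{enumerate}

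Next I will show that $M_p\not\models\mathit{Root}_n$, using $y=p$. By the lemma on the interpretation of $\mathit{Pow}_n$, inside $M_p$ the formula $\mathit{Pow}_n(r,p,2)$ means $r^p=2$, and no such $r$ exists. Lemma \ref{RCFmodelsHyp} gives $\mathit{Hyp}_n(p)$ in $M_p$, since $1<p\le 2^{2^n}$. Finally $p>1$, so the instance of $\mathit{Root}_n$ at $y=p$ fails.

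The main obstacle is the appeal to Lemma \ref{FieldExt}. That lemma relies on Theorem \ref{Lang thm} for irreducibility of $X^p-2$ over each $A_k$; the theorem's hypothesis is exactly that $2$ is not a $p$-th power in $A_k$, which is what the induction maintains. A second concern is that the set of realized integers $y$ in $M_p$ is the standard one, so that $\mathit{Hyp}_n(p)$ refers to the true prime $p$. This holds because $M_p\subseteq\mathbb{R}$. The bound on axiom size then follows as in the $\mathsf{PrA}$ case.
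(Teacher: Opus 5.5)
Your proposal is correct and is essentially the paper's proof. The shared steps are a countable chain of real fields over $\mathbb{Q}$ closed under square roots and monic odd-degree roots of degree $<p$, Lemma \ref{FieldExt} to keep out roots of $X^p-2$, Lemma \ref{RCFmodelsHyp} with the interpretation of $\mathit{Pow}_n$ to falsify the instance $y=p$, and Chebyshev to get $p>2^{2^n-1}$. Your choice to adjoin one root per stage, instead of the paper's whole-stage adjunctions followed by a reduction to finite subextensions, makes the induction via Lemma \ref{FieldExt} more direct; your check that square roots make the defined order agree with the real order fills a detail the paper glosses over (your remark $2<p$ tacitly needs $n\ge 1$, but the case $n=0$ is trivial).
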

\begin{proof}
Fix an arbitrary $n$ and some prime natural number $p$ smaller than $2^{2^{n}}$.

The strategy of this proof is to use Gödel's completeness theorem by constructing a model $M_p$ that verifies the axioms of ordered fields alongside the axiom $$\forall z\geq 0 \; \exists x \; x^2 - z=0$$ 
as well as the odd-degree root axioms 
$$\forall a_0, ... , a_{m-1} \exists x \; x^m + a_{m-1}x^{m-1} + \ldots + a_0 = 0 $$
for all odd $m$ up to $p-1$. It will also satisfy the hypotheses of $\mathit{Root}_n$, but specifically fail the conclusion $\exists r \mathit{Pow}_{n}(r,p,2)$ for $y=p$. Consequently, proving $\mathit{Root}_n$ necessitates axioms of the form $\forall a_0, ... , a_{k-1} \exists x \; x^k + a_{k-1}x^{k-1} + \ldots + a_0 = 0$ where $k \geq 2^{2^{n}-1}$.

First, any field extension of $\mathbb{Q}$ embedded in $\mathbb{R}$ satisfies the ordered field axioms and the hypotheses of $\mathit{Root}_n$ by Lemma \ref{RCFmodelsHyp}.

Consider the field extension of $\mathbb{Q}$ generated by adjoining the real roots of all polynomials in $\mathbb{Q}[X]$ of odd degree strictly smaller than $p$, alongside the real roots of polynomials $X^2-z$ (for $z>0$). Denote this $\mathbb{Q}_1$. Iteratively, form $\mathbb{Q}_2$ by applying the same process over $\mathbb{Q}_1$, and continue this for all positive integers $m$, producing a chain $\mathbb{Q}_{m} \supseteq \mathbb{Q}_{m-1}$.

Define $\mathbb{Q}_{0}$ as the union of all $\mathbb{Q}_{m}$. Being the union of a chain of fields, $\mathbb{Q}_{0}$ is itself a field. Furthermore, it contains roots for all polynomials in $\mathbb{Q}_0[X]$ of odd degree strictly smaller than $p$ and for all $X^2-z$ ($z>0$). Given an arbitrary polynomial $P(X) \in \mathbb{Q}_0[X]$, its coefficients must all belong to some $\mathbb{Q}_{j}$ for a sufficiently large $j$. By construction, $P(X)$ gains a root in $\mathbb{Q}_{j+1}$, and therefore in $\mathbb{Q}_{0}$. Thus, $\mathbb{Q}_0$ satisfies the required axioms.

We now verify that $\mathbb{Q}_0$ lacks a root of $X^p-2$. If it possessed one, it would emerge in some minimal $\mathbb{Q}_j$. Because the algebraic closure of $\mathbb{Q}$ is countable, the extension $\mathbb{Q}_j$ over $\mathbb{Q}_{j-1}$ is generated by an enumerated set of roots $s_0, s_1, \ldots$ Consequently, the root must appear in some finite sub-extension $\mathbb{Q}_{j-1}(s_0, s_1, ..., s_k)$ for a minimal $k$. By Lemma \ref{FieldExt}, since $s_{k}$ resolves a polynomial of degree strictly smaller than $p$, the extension adjoining $s_k$ over $\mathbb{Q}_{j-1}(s_0, ..., s_{k-1})$ cannot introduce a root of $X^p-2$, yielding a contradiction.

We can thus take $M_p = \mathbb{Q}_0$. By Chebyshev's theorem, there exists at least one prime $k$ bounding $2^{2^{n}-1} < k < 2^{2^{n}}$, providing a model $M_k$ as constructed. Thus, proving $\mathit{Root}_n$ in $\mathsf{RCF}$ requires an axiom corresponding to $k$, which strictly exceeds $2^{2^{n}-1}$ in length. This implies that any proof of $\mathit{Root}_n$ in $\mathsf{RCF}$ must similarly exceed this bound.
\end{proof}

\begin{thm}[Speedup]
There is a $2^{2^{x^{\varepsilon}}}$ speedup of $\mathsf{Tarski}$ over $\mathsf{RCF}$.
\end{thm}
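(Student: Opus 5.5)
The plan mirrors the proof of the speedup theorem for $\mathsf{PrA}$. The witnessing sentences will be the formulas $\mathit{Root}_n$, and we compare two bounds. First, by the lemma on bounded size definability of bounded exponentiation together with Theorem \ref{eff_def}, the formulas $\mathit{Pow}_n$ have size polynomial in $n$. Hence $\mathit{Hyp}_n$ and $\mathit{Root}_n$ also have size polynomial in $n$; write this as $|\mathit{Root}_n|\le n^{c}$ for some constant $c$. By the lemma stating that $\mathsf{Tarski}$ proves $\mathit{Root}_n$ in polynomial size, there is a constant $d$ with proofs of length at most $n^{d}$. As in the $\mathsf{PrA}$ case, this works because the proof is a single template in which $\mathit{Pow}_n$ is substituted for a predicate letter, so its length is linear in $|\mathit{Pow}_n|$.

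Second, I need to confirm that $\mathit{Root}_n$ is actually provable in $\mathsf{RCF}$, since a speedup is only meaningful for sentences provable in both theories. The sentence is true in $\mathbb{R}$. Since $\mathsf{RCF}$ is complete (by Tarski's quantifier elimination), it proves $\mathit{Root}_n$. The preceding lemma then says that any such proof uses an axiom of size at least $2^{2^n-1}$. A proof contains each axiom it uses as a subformula, so its length is at least $2^{2^n-1}$.

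Finally, I convert these bounds into the stated form. Set $x$ to be the $\mathsf{Tarski}$ proof length, so $x\le n^{d}$ and hence $n\ge x^{1/d}$. The $\mathsf{RCF}$ proof length is then at least $2^{2^{n}-1}\ge 2^{2^{x^{1/d}}-1}$. Absorbing the $-1$ by slightly shrinking the exponent gives at least $2^{2^{x^{\varepsilon}}}$ for $\varepsilon<1/d$ and all sufficiently large $n$, which is the claimed double exponential speedup. If one prefers to measure speedup against the sentence length instead, the same computation with $|\mathit{Root}_n|\le n^c$ gives a bound of the same form.

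The main obstacle is making sure the lower-bound lemma really yields a length bound on whole proofs. The model-theoretic argument only shows that the finite set of axioms used must include some odd-degree root axiom of degree $k>2^{2^n-1}$. This holds because the axioms of smaller degree are all satisfied by $M_k$, while $\mathit{Root}_n$ fails there. The size of that axiom is at least its degree, since it contains $k$ symbols for the coefficients. I would state this explicitly, noting that an axiom of degree $k$ has length $\ge k$, and the remaining steps are routine arithmetic.
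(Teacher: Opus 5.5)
Your proposal is correct and is essentially the paper's argument: you combine the polynomial-size $\mathsf{Tarski}$ proofs of $\mathit{Root}_n$ with the $2^{2^n-1}$ lower bound for $\mathsf{RCF}$, and your additions (provability of $\mathit{Root}_n$ in $\mathsf{RCF}$ by completeness, a degree-$k$ odd-root axiom having length at least $k$, and the conversion $n\ge x^{1/d}$ giving any $\varepsilon<1/d$) just make explicit what the paper's two-line proof leaves implicit. One small correction to your aside: unlike the $\mathsf{PrA}$ case, $\mathit{Hyp}_n$ here does not assert totality of $\mathit{Pow}_n(\cdot,y,\cdot)$, which the $\mathsf{Tarski}$ argument needs to form $c^y$, so that proof cannot be a single template in a predicate letter (in $\mathbb{R}$, take the graph of $x\mapsto x^y$ with the point at $x=2^{1/y}$ deleted: it satisfies all the template hypotheses and LUB but not the conclusion); totality must instead be proved separately, level by level, which still costs only polynomially many symbols in $n$ and leaves your conclusion intact.
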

\begin{proof}
The preceding lemmas establish that $\mathit{Root}_n$ holds a polynomial-length proof in $\mathsf{Tarski}$, yet requires proofs exceeding length $2^{2^{n}-1}$ in $\mathsf{RCF}$. This directly yields the stated speedup.
\end{proof}

\section{Upper Bounds for Proofs in $\mathsf{PrA}_{\mathit{alt}}$}
In this section, we shall describe a modified version of Cooper's algorithm that will enable us to show that any theorem of $\mathsf{PrA}_{\mathit{alt}}$ has a proof in at most triple exponential size in the size of the proven sentence. We modify the algorithm in order to deal with the fact that we work without subtraction (in contrast with Cooper's algorithm, defined for the integers). We shall thus show that the triple exponential bound shown by Oppen \cite{Opp78} leads to a triple exponential bound for the size of the proof in the axiomatic $\mathsf{PrA}_{\mathit{alt}}$.

The quantifier elimination algorithm works in the signature extended by the relation $x\le y$ and the relations $x\equiv_m y$, where $m$ ranges over all standard positive naturals. 

We now describe how, given a quantifier-free formula $F(x)$, we equivalently (over the standard model of Presburger arithmetic) transform the formula $\exists x \,F(x)$ to a quantifier-free formula $F'$ where $F'$ does not contain the variable $x$ nor any new variables. Our equivalent transformation generally follows Oppen's description of Cooper's algorithm \cite{Opp78}, but adapts it to the case where the quantifiers range over naturals instead of integers (to match the version of Presburger arithmetic that we consider in the present paper).

\textbf{Step 1.}
We equivalently transform all the atomic formulas in $F(x)$ (equalities $t=u$, inequalities $t\le u$, and modulo comparisons $x\equiv_m y$) to the form where all occurrences of $x$ are on one side of the comparison, by taking off $x$'s from both sides until one of the sides has no more $x$'s.

\textbf{Step 2.}
We further equivalently transform all the atomic formulas involving $x$ to the form where $x$ can be, as before, only on one of the sides, but furthermore, the number of $x$'s is the same in all of the atomic formulas having $x$'s at all. For this, we equivalently transform $t= u$ to $tk= uk$, $t\le u$ to $tk\le uk$, and $t\equiv_m u$ to $tk\equiv_{mk} uk$ for suitably chosen positive naturals $k$ (depending on the number of $x$'s in the corresponding original atomic formula). Note that here, as before, $tk$ is a shorthand for $\underbrace{t+(\ldots+(t+t)\ldots)}\limits_{\mbox{\footnotesize $k$-many $t$'s}}$.
Then, by applying commutativity and associativity, we put all the sides of the atomic formulas involving $x$ into the form $xC+t_i$, where $t_i$ contains no occurrences of $x$ and $C$ is the same for all the atomic formulas.

\textbf{Step 3.}
We further equivalently transform all the atomic formulas involving $x$ so that the sides involving $x$ are exactly the same in all the formulas and are of the form $xC+t$. For this, we collect all the $t_i$'s and, for each atomic formula involving $x$, we add all the $t_i$'s other than the one corresponding to that particular formula to both sides.

Thus, we obtained an equivalent transformation of $F(x)$ to the form $F''(Cx+t)$, for a certain $F''(y)$.

\textbf{Step 4.}
We equivalently transform $\exists x\, F(x)$ to $\exists y\, F'''(y)$, where $y$ can occur only as a whole one side of an atomic formula, and furthermore no more than once in each atomic formula. For this, we set $F'''(y)$ to be $t\le y \land y\equiv_C t \land F''(y)$.

\textbf{Step 5.}
We equivalently transform $\exists y\, F'''(y)$ to the form
\[F'\colon \bigvee_{t\in T}\bigvee\limits_{0\le r\le D} F'''(t+r),\]
where $D$ is the least common multiplier of $m$'s of the predicates $\equiv_m$ used in $F'''(y)$ and $T$ is the set consisting of constant $0$ and all $t$'s from atomic formulas from $F'''(y)$ of the forms $t\le y$, $y\le t$, $t=y$, $y=t$. 

\begin{thm} \label{equivalent_transform}
Let $F'$ be the quantifier-free formula described by our algorithm for $\exists x\, F(x)$. Then the proof in $\mathsf{PrA}_{\mathit{alt}}$ of the equivalence between the formulas $\exists x F(x)$ and $F'$ is of polynomial size in the size of the end formula $\exists x F(x)\leftrightarrow F'$.
\end{thm}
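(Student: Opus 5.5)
We will prove the equivalence step by step, showing that each of Steps 1--5 turns the current formula into one that is $\mathsf{PrA}_{\mathit{alt}}$-provably equivalent to it, with a proof of size polynomial in the size of the final equivalence $\exists x F(x)\leftrightarrow F'$. Chaining the five equivalences by transitivity then gives the theorem. Throughout, every intermediate formula and every natural number we meet ($C$, the multipliers $k$, $D$, the moduli $mk$) occurs in $F'$. So each has size, in unary, bounded by $|F'|$, and we may use proofs that are polynomial in these numerals.

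\textbf{Steps 1--3.} These rewrite atomic formulas one at a time. By the replacement lemma of first-order logic it is enough to prove each atomic equivalence with a short proof. For Step 1 we use cancellativity from $\mathsf{PrA}^-$: $x+t=x+u\leftrightarrow t=u$. We also need $x+t\le x+u\leftrightarrow t\le u$ (cancellation again) and $x+t\equiv_m x+u\leftrightarrow t\equiv_m u$. For each form we give a uniform derivation and apply it once per removed $x$, which is linear in the number of $x$'s. For Step 2 we need
\[
t=u\leftrightarrow tk=uk,\qquad t\le u\leftrightarrow tk\le uk,\qquad t\equiv_m u\leftrightarrow tk\equiv_{mk}uk .
\]
These are proved by an induction on $k$ in the metatheory, producing proofs of size polynomial in $k$. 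The right-to-left directions use totality of $\le$ (axiom 4) together with strict monotonicity of $v\mapsto vk$, which follows from cancellation. Reordering by commutativity and associativity costs at most quadratically in the term length. Step 3 is again cancellation, adding the same terms to both sides.

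\textbf{Step 4.} We show $\exists x\,F''(Cx+t)\leftrightarrow\exists y\,(t\le y\wedge y\equiv_C t\wedge F''(y))$. Left to right is immediate: take $y=Cx+t$. Right to left is the key use of the modulo axioms. From $t\le y$ we get $y=t+w$, and from $y\equiv_C t$ we get $w=Cz$, or $t=t+Cz$, which forces $z=0$. Here we must be careful with the paper's two-sided definition of $\equiv_m$ and use cancellation. Then $x=z$ works.

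\textbf{Step 5, the main obstacle.} We must show $\exists y\,F'''(y)\leftrightarrow\bigvee_{t\in T}\bigvee_{r\le D}F'''(t+r)$, where $F'''(y)$ involves $y$ only through atoms $t\le y$, $y\le t$, $y=t$ and $y\equiv_m s$ with $m\mid D$. The direction from right to left is trivial. For left to right, argue inside $\mathsf{PrA}_{\mathit{alt}}$ as follows. Take $y$ with $F'''(y)$, and let $t_0$ be the largest element of $T$ with $t_0\le y$. It exists because $0\in T$, and choosing it is a finite case split over the $\le$-order of $T$ using totality, so it is polynomial in $|T|$. We use $\mathsf{PrA}_{\mathit{alt}}$ to write $y-t_0\equiv_D r$ for some $r<D$. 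Since $D$ is not prime, we derive the axiom for $D$ from the prime axioms via the factorization of $D$. This costs a proof polynomial in $D$, which is acceptable since $D$ is bounded by the unary size of the disjuncts in $F'$. We then show that either $y=t_0+r$, or $y'=t_0+r$ satisfies every atom exactly as $y$ does:
\begin{itemize}
\item the congruence atoms agree because $y\equiv_D y'$ and $m\mid D$;
\item the order atoms agree because no element of $T$ lies in $(t_0,y]$, and the constants are chosen so that equality points are also in $T$.
\end{itemize}
A small worry is that $r$ should range up to $D-1$ with $y\ge t_0+r$; if $y<t_0+D$ then $y$ itself has the form $t_0+r$. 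The delicate part is making this argument into a proof uniform over the formula. We state a single lemma scheme, \emph{each atom $A(y)$ of $F'''$ satisfies $A(y)\leftrightarrow A(t_0+r)$ under the case hypotheses}, and lift it to $F'''$ by the replacement lemma. This keeps the total size polynomial in $|T|\cdot D\cdot|F'''|$, and that in turn is polynomial in $|F'|$.
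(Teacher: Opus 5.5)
Your plan matches the paper's. Steps 1--4 are routine rewritings in $\mathsf{PrA}^-$. Step 5 fixes $y$ with $F'''(y)$, takes the largest $t_0\in T$ with $t_0\le y$, picks a residue $r$, proves $A(y)\leftrightarrow A(t_0+r)$ atom by atom, and lifts this by replacement. You also note that the modulo axiom for a composite $D$ must be derived from the prime instances, at cost polynomial in $D\le|F'|$; the paper's sketch uses this without saying so, so that is a useful addition. One minor correction: Step 4 does not use the modulo axioms at all. Its right-to-left direction only unfolds $\equiv_C$ and uses cancellation and $a+b=0\to a=0$; the modulo axioms enter only in Step 5.

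However, Step 5 fails as written, because of the range you allow for $r$. You take $r<D$ with $y-t_0\equiv_D r$, and claim that either $y=t_0+r$ or $t_0+r$ satisfies every atom exactly as $y$ does. Suppose $y\ge t_0+D$ and $D$ divides $y-t_0$. Then $r=0$, so $t_0+r=t_0$, which is an element of $T$ different from $y$. Any atom $y\le t_0$ or $y=t_0$ is then false at $y$ but true at $t_0$. Your bullet ``no element of $T$ lies in $(t_0,y]$'' only helps when $t_0+r$ itself lies in $(t_0,y]$, i.e.\ when $r\ge 1$.

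Concretely, take $F(x)= x\equiv_2 0\wedge\neg(x\le 0)$. Then $C=1$, $T=\{0\}$ and $D=2$. The witness $y=2$ gives $t_0=0$ and $r=0$, but $F'''(0)$ is false, and so is $F'''(1)$. Only the disjunct with $r=D=2$ is true. So with $r$ ranging only up to $D-1$, as you suggest, the claimed equivalence would actually be false.

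The repair is what the paper does:
\begin{itemize}
\item If $y\in T$, take $r=0$.
\item Otherwise $t_0<y$. Choose the representative $r$ with $1\le r\le D$ and $t_0+r\equiv_D y$. This forces $t_0<t_0+r\le y$, so $t_0+r$ lies in the same gap of $T$ as $y$ and compares with every $u\in T$ exactly as $y$ does.
\end{itemize}
This is why the disjunction in $F'$ runs over $0\le r\le D$ inclusive.
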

\begin{proof}[Sketch]
    The proof of the equivalence $\exists x \,F(x)\mathrel{\leftrightarrow} \exists y\, F'''(y)$ (i.e., steps 1--4 of the algorithm) is formalizable in a fairly straightforward manner. Thus, in the rest of the proof, we establish the equivalence between $\exists y\, F'''(y)$ and $F'$.

    For this, we formalize within $\mathsf{PrA}_{\mathit{alt}}$ the following argument. We consider $y$ such that $F'''(y)$ holds and claim that $F'$ holds. If $y$ is equal to one of $t\in T$, then we are already done, so further we assume that $y$ is distinct from all $t\in T$. We observe that by a polynomial proof we can prove the disjunction over $t\in T$:
    \[\big( t<y\land \bigwedge\limits_{u\in T} (u\le y\to u\le t)\land \bigwedge\limits_{u\in T} (t<u\to y<u).\]
    That is, there is a maximal element among $\{t\in T\mid t\le y\}$ and this maximal element is distinct from $y$ itself. 
    Further, we proceed by proof by cases, assuming $t\in T$ to be fixed to satisfy the respective disjunct just above. Now we pick $1<r\le D$ such that $t+r\equiv_D y$. 
    
    To finish the proof, we will show for each atomic formula $A(y)$ in $F'''(y)$ that $A(y)\mathrel{\leftrightarrow} A(t+r)$. Clearly, $t<t+r\le y$ and thus by the property of $t$, $t+r$ compares with all $u\in T$ in exactly the same way as $y$, i.e., for each $u\in T$, we have $y=t\mathrel{\leftrightarrow} t+r=u$, $y\le t\mathrel{\leftrightarrow} t+r\le u$, and $t\le y\mathrel{\leftrightarrow} y\le t+r$. And for each modulo comparison formula $u(y)\equiv_m v(y)$ in $F'''(y)$, we use the fact that $t+r\equiv_D y$ and $m$ being a divisor of $D$ to see that we indeed get $u(t+r)\equiv_mv(t+r)$.
\end{proof}

\begin{lemma} \label{qunatifier_free_proofs} Every quantifier-free sentence $F$ in the language of $\mathsf{PrA}$ expanded by $x\le y$ and $x\equiv_m y$ that is true in the standard model $(\mathbb{N},0,1,+)$ has a proof in $\mathsf{PrA}_{\mathit{alt}}$ of a length polynomial in the length of $F$.\end{lemma}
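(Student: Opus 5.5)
Since $F$ is a closed quantifier-free sentence, its atoms compare closed terms built from $0,1,+$, so every closed term $t$ denotes a standard numeral. The plan is in three steps: evaluate each term provably, decide each atom provably, and then assemble the propositional combination.

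\textbf{Evaluating terms.} For each closed term $t$ of $F$ we prove $t=\underline{k}$, where $k$ is its standard value and $\underline{k}$ is the right-associated numeral $1+(\ldots+(1+1)\ldots)$. We do this by induction on the structure of $t$, using associativity and commutativity from $\mathsf{PrA}^-$. Each subterm in $F$ written in unary has value at most $|F|$, so every numeral that arises has length $O(|F|)$. The rewriting $\underline{a}+\underline{b}=\underline{a+b}$ takes $O(b)$ associativity steps, each of size $O(|F|)$. In total this is polynomial in $|F|$.

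\textbf{Deciding atoms.} Each atom becomes a claim about numerals.
\begin{itemize}
\item For $\underline{a}=\underline{b}$ with $a\ne b$, say $a<b$, we cancel $a$ ones by cancellativity to reach $0=\underline{b-a}$. Since $\underline{b-a}$ has the form $s+1$, Axiom 2 refutes this.
\item For $\underline{a}\le\underline{b}$, if it is true we exhibit the witness $z=\underline{b-a}$. If it is false ($a>b$), we assume $\underline{a}+z=\underline{b}$, cancel to get $\underline{a-b}+z=0$, and derive a contradiction using Axiom 3 on $z$ together with Axiom 2.
\item For $\underline{a}\equiv_m\underline{b}$, if it is true we exhibit $z=\underline{|a-b|/m}$ directly. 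If it is false, we assume $mz+\underline{a}=\underline{b}$ (or the symmetric form). We argue by cases on whether $z=0$: if not, Axiom 3 gives $z=w+1$, so $mz=mw+\underline{m}$. After at most $\max(a,b)/m+1$ such unfoldings we reach a numeral equation refuted as above.
\end{itemize}
Since $\le$ and $\equiv_m$ are treated as abbreviations, any $m$ appearing in $F$ satisfies $m\le|F|$. Hence every refutation has size polynomial in $|F|$.

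\textbf{Assembling.} Once each atom $A$ is proved or refuted, $F$ follows by a propositional derivation whose size is linear in $|F|$. We use truth-table evaluation along the formula tree in a Hilbert/sequent system with cuts.

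The main obstacle is the refutation of a false congruence, because it seems to require case analysis on an unbounded $z$. I would avoid the general $\mathsf{PrA}_{\mathit{alt}}$ modular axioms entirely. The refutation of $mz+\underline{a}=\underline{b}$ only needs the finitely many unfoldings via Axiom 3 described above, and their depth is bounded by $b\le|F|$, so the proof stays polynomial. If literal sizes of the $\equiv_m$ shorthand were instead measured with $m$ in binary, this step would need reworking. With the unary reading fixed by the paper's definitions, the bound goes through.
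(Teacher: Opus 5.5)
Your proposal is correct and follows essentially the same route as the paper. Both reduce the problem to closed literals, rewrite both sides as numerals, decide each literal using only basic $\mathsf{PrA}^-$ facts (peeling off ones, and blocks of $m$ for congruences), and then assemble $F$ propositionally. The paper proves its peeling clauses once with free variables, for example $x+1\equiv_m y+1\leftrightarrow x\equiv_m y$ and $0\equiv_m x+m\leftrightarrow 0\equiv_m x$, and then instantiates them, whereas you unfold each closed instance directly via Axiom 3; this is an organizational difference only, and both give a polynomial bound under the unary reading of $\equiv_m$ that the paper's definitions fix.
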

\begin{proof}[Sketch]
    Clearly, it suffices to prove the lemma for true literals (of the extended signature). The proof for the case of literals is routine. First, we equivalently transform the formula of interest to a form where both sides are in the form of a numeral. Then we establish polynomial $\mathsf{PrA}_{\mathsf{alt}}$-proofs of the properties giving inductive definitions of atomic formulas:
    \begin{enumerate}
        \item $0 = 0$;
        \item $\lnot 0=x+1$;
        \item $\lnot x+1=0$; 
        \item $x+1 = y+1\mathrel{\leftrightarrow}x= y$;
        \item $0\le x$;
        \item $\lnot x+1\le 0$;
        \item $x+1\le y+1\mathrel{\leftrightarrow}x\le y$;
        \item $0\ne (\ldots(0+1)+\ldots)+1$;
        \item $0\equiv_m 0$;
        \item $\lnot 0\equiv_m k$, for $1\le k<m$;
        \item $\lnot k\equiv_m 0$, for $1\le k<m$;
        \item $0\equiv_m x+m\mathrel{\leftrightarrow} 0\equiv_m x$;
        \item $x+1\equiv_m y+1\mathrel{\leftrightarrow} x\equiv_m y$.
    \end{enumerate}
    Using them, it is straightforward to show that the true closed literals have polynomial proofs. 
\end{proof}

\begin{thm}
For every sentence $F$ in the language of $\mathsf{PrA}$ that is true in the standard model $(\mathbb{N},0,1,+)$, there is always a proof of this sentence in $\mathsf{PrA}_{\mathit{alt}}$ that uses at most $2^{2^{2^{P(|F|)}}}$ characters, where $P(x)$ is some polynomial.
\end{thm}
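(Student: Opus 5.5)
The plan is to run the modified Cooper procedure from the previous section on $F$ and assemble a $\mathsf{PrA}_{\mathit{alt}}$-proof from the local equivalence proofs of Theorem \ref{equivalent_transform} and the quantifier-free proofs of Lemma \ref{qunatifier_free_proofs}.

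First, I would put $F$ in prenex form, replacing each $\forall x$ by $\lnot\exists x\lnot$; this costs a proof of size polynomial in $|F|$. I would then eliminate quantifiers from the innermost outwards. At each stage the current formula has the form $Q_1x_1\ldots Q_kx_k\,\exists x\,G(x)$ with $G$ quantifier-free in the extended signature, and the procedure replaces $\exists x\,G(x)$ by the formula $G'$ of Step 5. By Theorem \ref{equivalent_transform}, $\exists x\,G\leftrightarrow G'$ has a $\mathsf{PrA}_{\mathit{alt}}$-proof polynomial in $|\exists x\,G\leftrightarrow G'|$. Since the remaining variables are free here, the usual congruence rules of first-order logic lift this to a proof of
\[
Q_1x_1\ldots Q_kx_k\,\exists x\,G(x)\leftrightarrow Q_1x_1\ldots Q_kx_k\,G',
\]
with only polynomial overhead. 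Chaining these $|F|$ equivalences gives $F\leftrightarrow F^\ast$, where $F^\ast$ is a quantifier-free sentence. Because every step preserves truth in $(\mathbb{N},0,1,+)$, $F^\ast$ is true there. Lemma \ref{qunatifier_free_proofs} then proves $F^\ast$ in size polynomial in $|F^\ast|$, and modus ponens yields $F$.

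The total proof length is therefore bounded by a polynomial in the largest intermediate formula, summed over at most $|F|$ stages. It remains to bound all intermediate formulas by $2^{2^{2^{P(|F|)}}}$.

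The main obstacle is this size bound, and here I would follow Oppen's analysis \cite{Opp78} of Cooper's algorithm, adapted to the natural-number version. Oppen's bound uses the binary size of coefficients, so my first step is to count sizes in a representation where numerals and multiplication by a constant $k$ are written compactly. I would then convert to the unary $\mathcal{L}_{\mathsf{PrA}}$ syntax of $tk$ and $\equiv_m$ only at the end; this adds at most one exponential, which is absorbed into the triple-exponential form.

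The invariants I would track through Steps 1--5 are:
\begin{itemize}
\item the number of distinct atomic formulas, which gets at most squared per elimination because of the $|T|\cdot D$ substitutions;
\item the magnitude of coefficients and moduli, where $C$ and $D$ are lcm's of existing constants, so their bit-length grows by a multiplicative factor per elimination.
\end{itemize}
Following Oppen, I would argue that after $k$ eliminations the bit-lengths are at most $2^{2^{O(k\cdot\mathrm{poly}(|F|))}}$ and the number of atoms is at most doubly exponential. The key point, as in Oppen, is that the set of distinct atoms stays small because the $t\in T$ are reused across stages, even though a naive bound would be worse.

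The $|T|\cdot(D+1)$ disjuncts of Step 5 contribute at most a further exponential. This gives $|F^\ast|\le 2^{2^{2^{P(|F|)}}}$, and the polynomial blow-up from Theorem \ref{equivalent_transform} and Lemma \ref{qunatifier_free_proofs} can be absorbed by enlarging $P$.
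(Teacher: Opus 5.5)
Your proposal follows the paper's proof: you chain the per-quantifier equivalences from Theorem~\ref{equivalent_transform}, close with Lemma~\ref{qunatifier_free_proofs} for the final quantifier-free sentence, and import Oppen's analysis for the triple-exponential size bound, and you are more explicit than the paper about lifting under the quantifier prefix and about unary versus binary numerals. One caution on phrasing: an extra exponential applied to the total size cannot be absorbed into a tower of fixed height, so the unary conversion is harmless only because it acts on individual numerals, multipliers and moduli, whose bit-length is doubly exponential by your own invariant and whose unary length is therefore at most triple-exponential; likewise, the quantity that squares per elimination is the number of linear forms up to constant terms, not the number of distinct atoms, which the $D+1$ shifts in Step 5 can multiply by up to $D+1$.
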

\begin{proof}
 First, let us remark that our algorithm transforms sentences in basically the same way as the one analyzed by Oppen \cite{Opp78} and, as in his case, for the same reasons, we get the triple exponential bound on the length of the quantifier-free formula that is equivalent to a given formula with quantifiers. Theorem \ref{equivalent_transform} shows that there is a proof in $\mathsf{PrA}_{\mathit{alt}}$ of the equivalence between the original formula and the equivalent quantifier-free one that is at most of triple exponential length in terms of the length of the original formula. For the true quantifier-free sentence in the expanded signature that we get at the end of this chain, there is a polynomial-length proof by Lemma \ref{qunatifier_free_proofs}.
\end{proof}

\bibliographystyle{plain} 
\bibliography{refs} 

\end{document}